\def\Bbb{\mathbb}
\def\Cal{\mathcal}
\def\R{\mathbb{R}}
\def\W{\mathbb{W}}
\def\X{\mathbb{X}}
\def\Y{\mathbb{Y}}
\def\Z{\mathbb{Z}}
\def\N{\mathbb{N}}
\def\cE{\mathcal{E}}
\def\cV{\mathcal{V}}
\def\cT{\mathcal{T}}
\def\cH{\mathcal{H}}
\def\cN{\mathcal{N}}
\def\cR{\mathcal{R}}
\def\al{\alpha}
\def\be{\beta}
\def\ga{\gamma}
\def\de{\delta}
\def\ep{\varepsilon}
\def\io{\iota}
\def\ka{\kappa}
\def\la{\lambda}
\def\rh{\rho}
\def\si{\sigma}
\def\ph{\varphi}
\def\ps{\psi}
\def\om{\omega}
\def\De{\Delta}
\def\Ph{\Phi}
\def\Om{\Omega}
\def\Up{\Upsilon}
\def\na{\nabla}
\def\form#1{\mathbf{#1}}
\def\dform#1{\dot{\mathbf{#1}}}
\def\ddform#1{\ddot{\mathbf{#1}}}
\newcommand{\id}{\operatorname{id}}
\newcommand{\End}{\operatorname{End}}
\newcommand{\lpl}{
  \mbox{$
  \begin{picture}(12.7,8)(-.5,-1)
  \put(2,0.2){$+$}
  \put(6.2,2.8){\oval(8,8)[l]}
  \end{picture}$}}
\renewcommand{\vec}[1]{\mathbf{#1}}
\newcommand{\wh}{\widehat}
\newcommand{\cq}{{\Cal Q}}
\newcommand{\modDe}{\fl}
\newcommand{\modD}{\fD}
\newcommand{\tmodD}{\widetilde{\fD}}
\newcommand{\modBox}  {\mbox{$                                    
  \begin{picture}(9,8)(1.6,0.15)
  \put(1,0.2){\mbox{$ \Box \hspace{-7.8pt} /$}}
\end{picture}$}} 
\newcommand{\tmodBox}{\widetilde{\modBox}}
\newcommand{\newc}{\newcommand}
\newtheorem{theorem}{Theorem}[section]
\newtheorem{lemma}[theorem]{Lemma}
\newtheorem{proposition}[theorem]{Proposition}
\newtheorem{definition}[theorem]{Definition}
\theoremstyle{remark}
\newtheorem{remark}[theorem]{\rm\bf Remark}
\newtheorem*{remark*}{\rm\bf Remark}
\newcommand{\ce}{{\Cal E}}
\newcommand{\nd}{\nabla}
\newcommand{\Rho}{P}
\newcommand{\nn}[1]{(\ref{#1})}
\newcommand{\D}{\mbox{\boldmath{$ D$}}}
\newcommand{\bg}{\mbox{\boldmath{$ g$}}}
\newcommand{\J}{J}
\newcommand{\ct}{{\Cal T}}
\newcommand{\fl}{\mbox{$
\begin{picture}(9,8)(1.6,0.15)
\put(1,0.2){\mbox{$ \Delta \hspace{-7.8pt} /$}}
\end{picture}$}}
\newcommand{\afl}{\mbox{$
\begin{picture}(9,8)(1.6,0.15)
\put(1,0.2){\mbox{$ {\mbox{\boldmath$\Delta$}} \hspace{-7.8pt} /$}}
\end{picture}$}}
\newcommand{\fD}{\mbox{$
\begin{picture}(9,8)(1.6,0.15)
\put(1,0.2){\mbox{$ D \hspace{-7.8pt} /$}}
\end{picture}$}}
\newcommand{\afD}
{\mbox{$
\begin{picture}(9,8)(1.6,0.15)
\put(1,0.2){\mbox{$ \D \hspace{-7.8pt} /$}}
\end{picture}$}}
\newc{\aR}{\mbox{\boldmath{$ R$}}}
\newc{\aS}{\mbox{\boldmath{$ S$}}}
\newc{\aDeR}{\mbox{\boldmath{$ U$}}_B{}^P{}_C{}^Q}
\newc{\aDe}{\mbox{\boldmath$ \Delta$}}
\newc{\aNd}{\mbox{\boldmath$ \nabla$}}
\newc{\aK}{\mbox{\boldmath{$ K$}}}
\newc{\aL}{\mbox{\boldmath{$ L$}}}
\newcommand{\h}{\mbox{\boldmath{$ h$}}}
\def\sideremark#1{\ifvmode\leavevmode\fi\vadjust{\vbox to0pt{\vss
 \hbox to 0pt{\hskip\hsize\hskip1em
 \vbox{\hsize3cm\tiny\raggedright\pretolerance10000
 \noindent #1\hfill}\hss}\vbox to8pt{\vfil}\vss}}}%
\def\idx#1{{\em #1\/}}
\author{A. Rod Gover and Josef \v Silhan}
\email{gover@math.auckland.ac.nz} \title{Conformal operators on
  weighted forms; their decomposition and null space on Einstein manifolds}
\begin{document}

\begin{abstract}
There is a class of Laplacian like conformally invariant differential
operators on differential forms $L^\ell_k$ which may be considered as the
generalisation to differential forms of the conformally invariant
powers of the Laplacian known as the Paneitz and GJMS operators. On
conformally Einstein manifolds we give explicit formulae for these as
 factored polynomials in second order differential
operators. In the case the manifold is not Ricci flat we use this to
provide a direct sum decomposition of the null space of the $L^\ell_k$
in terms of the null spaces of mutually commuting second order
factors.
 \end{abstract}

\address{ARG: Department of Mathematics\\ The University of
  Auckland\\ Private Bag 92019\\ Auckland 1142\\ New
  Zealand; Mathematical Sciences Institute\\ Australian National
  University \\ ACT 0200, Australia} \email{r.gover@auckland.ac.nz}
\address{JS: Institute of Mathematics and Statistics \\ Masaryk
  University \\ Building 08 \\ Kotl\'a\v{r}sk\'a 2 \\ 611 37,
  Brno\\ Czech Republic} \email{silhan@math.muni.cz}

\maketitle

\pagestyle{myheadings}
\markboth{Gover \& \v Silhan}{Weighted forms on Einstein manifolds}

\section{Introduction}

On Riemannian and pseudo-Riemannian manifolds a natural differential
operator is said to be {\em conformally invariant} if it descends to a
well-defined differential operator on {\em conformal manifolds}, that
is manifolds equipped only with an equivalence class $c$ of metrics,
where $g,g'\in c$ means that $g'=fg$ for some positive function $f$.
This special class of operators have long held a central place in
mathematics and physics.  For example they govern the behaviour of
massless particles, and have desirable properties on conformally
compact manifolds, as for example used in general relativity
\cite{Dirac,MasonJNP}. They play a role in curvature prescription, in
extremal problems for metrics both in Riemannian geometry as well as
in string and brane theories
\cite{Branson-Orsted,CYAnnals,DM,GrahamWitten,schoen}. Via the
Fefferman bundle and metric, conformal structure and the corresponding
differential operators are also important in complex and CR geometry
\cite{FeffH,GoGr}.

A  rich programme surrounds the conformally invariant
differential operators $P_{2k}$ with leading term a power of the
Laplacian $\Delta^{k}$, see e.g.\ \cite{tomsharp,CGJP,FGbook,GrJ,GrZ,JuhlBook}
and references therein.  This family of operators on scalar functions
(or more accurately conformal densities) consists of the second order
{\em conformal Laplacian} \cite{Veblen}, (also called the {\em Yamabe
  operator}), the 4th order Paneitz operator \cite{Paneitz}, and
higher order {\em GJMS operators} of \cite{GJMS}.

Many of the important features of the GJMS family are shared by a
larger family of operators $L_k^\ell$ introduced in
\cite{BrGodeRham}. (Related operators of order 2, 4 and 6 were
constructed in \cite{tbms}. See also \cite{DesN} for the role of the
order 2 operator in Physics.) Each of these act on density-valued
differential forms ${\mathcal{E}}^k[w]$ (for the notation see Section
\ref{back}) and, up to a non-zero constant multiple, takes the form
\begin{equation*}
\underbrace{(n-2k+2\ell)}_{-2u}(\delta d)^\ell+
\underbrace{(n-2k-2\ell)}_{-2w}
(d\delta)^\ell+\mbox{ lower order terms},
\end{equation*}
 and carries ${\mathcal{E}}^k[w]$ to ${\mathcal{E}}^k[u]$.  Here $d$
 is the exterior derivative, $\delta$ its formal adjoint and this
 formula follows from the formulae for the operators on the sphere
 (\cite{tbjfa}, Remark 3.30).  At the $k=0$ specialisation (and with
 restrictions on $\ell$ in the case of even dimensions) these are the
 usual GJMS operators of \cite{GJMS}, see \cite{BrGodeRham}. For other
 $k$ and when neither $u$ or $w$ is zero they are evidently still {\em
   Laplacian like} in the sense that the composition $(w \delta d+ u d
 \delta )\circ L^k_\ell$ takes the form
$$
(w \delta d+ u d \delta )\circ L^k_\ell \sim \Delta^{\ell+1} 
+\mbox{ lower order terms} ,
$$ 
where the ``$\sim$'' means up to a non-zero constant multiple. This
means immediately that the operators $L^k_\ell$ are elliptic in the
case of Riemannian signature (conformal structures), or
hyperbolic/ultrahyperbolic in the case of other signatures.  If $u$ or
$w$ is zero (and $k\neq 0,n$) then these operators instead arise in
conformally invariant differential complexes (namely {\em detour
  complexes}) with the corresponding properties; for example the
complexes concerned are elliptic on Riemannian signature backgrounds.
In all cases the operators yield globally conformally invariant
pairings on compactly supported sections
\begin{equation}\label{pair}
\ce^k[\ell+k-\frac{n}{2}] \ni \varphi,\psi \mapsto \langle \varphi,\psi \rangle :=\int_M \varphi\cdot  L^\ell_k \psi\,
d\mu_{\mbox{\scriptsize\boldmath{$g$}}},
\end{equation}
where $\varphi\cdot L^\ell_k \psi \in{\mathcal{E}}[-n]$ denotes a
complete metric contraction between $\varphi $ and $L^\ell_k \psi$,
and $d\mu_{\mbox{\scriptsize\boldmath{$g$}}}$ is the conformal
measure. The operators $L^\ell_k$ are formally self-adjoint, and so
this pairing is symmetric.

It is shown in \cite{BrGodeRham} that the family $L^k_\ell$ leads to
operators which are analogues of Branson's Q-curvature (of
\cite{Branson-Orsted,tomsharp}) and a host new conformal
invariants. The work \cite{AG} of Aubry-Guillarmou shows that these
objects play a deep role in geometric scattering, and one that
generalises the corresponding results for the GJMS operators and
Q-curvature; see also \cite{GLW}. In these constructions, the null space
(or kernel) of the operators $L^k_\ell$ is important.

Except at the lowest orders, explicit general formulae are not
available for the $L^k_\ell$. For any particular operator a formula
may be obtained algorithmically via tractor calculus by the theory
developed in \cite{GoPetLap,GoPetobst}. However it is clear from the
special case of GJMS operators that the resulting operators, when
presented in the usual way, would be given by extremely complicated
formulae. 

It was shown by Graham \cite{Gr03, FGbook} using the Fefferman-Graham
ambient metric, and via a new construction in \cite{GoEinst}, that for
the GJMS operators striking simplifications are available on
conformally Einstein manifolds. In particular very simple
factorisation formulae are available which mean that, in this setting,
the operators may be given by a formula which is no more complicated
than Branson's corresponding formula for the case of the
standard round sphere \cite{tomsharp}. (We note here that similar
formulae are available for the conformal ``powers'' of the Dirac
operator on the sphere \cite{LR,ESou}.) This means that in this
setting (and without restriction on signature or compactness) one may
obtain explicit decompositions of the null space of the GJMS operators
in terms of eigenspaces of the Laplacian \cite{GoSiDec}.

The situation is considerably more subtle for the operators $L^k_\ell$ on 
weighted differential forms $\cE^k[w]$, with the situation varying significantly
between different cases. In \cite{GoSiEHarm} it was shown that
for the case of $w=0$ (i.e.\ when the domain space is that of {\em
  true} or {\em unweighted} differential forms) it is possible to
treat the operators $L^k_\ell$ in a manner very similar to the
treatment of the GJMS operators in \cite{GoEinst}. The result was that, 
 in setting of an Einstein structure, one may recover rather
directly the detour complexes and Q-operators on differential
forms. What is more these were shown to be given by simple explicit
formulae; these involved factorisations of the operators which
generalise those from \cite{GoEinst}.

The aim of this article is to complete the picture by treating the
operators $L^k_\ell$ when $w\neq 0$, again in the setting of an
Einstein structure. This case is much more difficult. On true forms,
as treated in \cite{GoSiEHarm}, the exterior derivative $d$ acts in a
conformally invariant way and the operators $L^k_\ell$ factor though
this, and its conformal adjoint $\delta$ (which is also conformally
invariant in the way it arises). Indeed on true forms the operators
$L_k^\ell$, have $\ell = n/2-k$ (which may thus be omitted in the
notation) and take the form $L_k=\delta Q_{k+1} d: \cE^k[0] \to
\cE^k[-n+2k]$ for suitable operators $Q_{k+1}$. But the form of this
composition immediately implies simplifications of the middle
operators $Q_{k+1}$, by dint of the identities $d\circ d=0$ and
$\delta \circ \delta =0$. Similar observations also simplify the
tractor formulae involved in that case. However these simplifications
are not available for the remaining weights $w\neq 0$, and thus
significant new ideas were required here.  The key tool, that we
develop and use, is Theorem \ref{MM*}. This gives a basic
factorisation result that can then be used to inductively derive our
final results.  In fact this approach enables us to deal with all
weights; thus we recover and significantly extend many of the results
of \cite{GoSiEHarm}.

We show that the operators $L_k^\ell: \cE^k[w] \to \cE^k[w-2\ell]$, 
where $w=k+\ell-n/2$ 
and $\ell \geq 1$,  may be expressed as compositions $L_k^\ell =
S_1 \ldots S_\ell$ where every factor on the right hand side has the
form $S_i = ad\de + b\de d+c$ for some scalars $a,b,c \in \mathbb{R}$.
(Note these 2nd order factors commute.)  
In detail, we have
the following theorem which completely describes the differential operators
$L^\ell_k$ on Einstein manifolds.
\begin{theorem}\label{main}
Let $\Ph := \{1,\ldots,\ell\}$ and $w=k+\ell-n/2$ where $1 \leq k \leq
\frac{n}{2}$.  The operator
$$ 
L_k^\ell: \cE^k[w] \to \cE^k[w-2\ell]
$$
has the explicit form
$$
L_k^\ell \sim \begin{cases}
(d\de-\de d) P_k^{\Ph \setminus \{\ell\}}(d\de,\de d) & k=n/2 \\
P_k^{\Ph}(d\de,\de d) & w \leq 0 \quad \mbox{and} \quad k<n/2 \\
\widetilde{P}_k(d\de,\de d)P_k^{\Ph \setminus \{w,w+1\}}(d\de,\de d)
& w \geq 1 \quad \mbox{and} \quad k<n/2
\end{cases},
$$
for $n$ even and 
$$
L_k^\ell \sim P_k^{\Ph}(d\de,\de d)
$$
for $n$ odd where $\sim$ means ``is equal up to nonzero scalar multiple''
and two variable polynomials $P_k$ and $\widetilde{P}_k$ are given in
\nn{Yam} and \nn{SqYam}.
\end{theorem}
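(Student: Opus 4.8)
The plan is to prove, by induction on $\ell$, the sharper statement that $L_k^\ell$ equals --- up to a nonzero constant --- a product $S_1\cdots S_\ell$ of mutually commuting second-order operators of the form $S_i=a_i\,d\de+b_i\,\de d+c_i$, and then to recognise this product as the two-variable polynomial asserted in the theorem. All computations are carried out in a fixed Einstein scale $g$ from the conformal class. In that gauge there is a parallel standard tractor $I$ and the tractor connection is that of an Einstein space, so every conformally invariant ingredient in the tractor construction of $L_k^\ell$ --- the Thomas operator, contractions with $I$, and the tractor curvature --- collapses to an expression in the two commuting Laplacian-type operators $d\de$ and $\de d$ together with a curvature constant (the squared length of $I$, a multiple of the scalar curvature, zero in the Ricci-flat case). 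This is exactly why the final answer is a polynomial $P_k(d\de,\de d)$; and since $d\circ d=0=\de\circ\de$ forces $d\de\circ\de d=0=\de d\circ d\de$, any such polynomial is automatically a constant plus a polynomial in $d\de$ alone plus a polynomial in $\de d$ alone.

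For the induction the engine is Theorem \ref{MM*}: it expresses $L_k^\ell$, up to nonzero scalar, as the composition of a second-order operator $S_\ell=a\,d\de+b\,\de d+c$ of the stated shape with a member of the same family having the parameter $\ell$ lowered by one, the scalars $a,b,c$ being explicit rational functions of $n$, $k$, $\ell$ (and the curvature constant) read off from the eigenvalue bookkeeping of the $I$-contractions. Since $d\de$ and $\de d$ commute with everything in play, composing this factor with the inductively known polynomial for the smaller operator is just multiplication of polynomials, so the inductive step reduces to determining $a_\ell,b_\ell,c_\ell$ and verifying that adjoining the corresponding linear factor reproduces the claimed formula. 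The base case $\ell=1$ is the direct evaluation of the second-order operator $L_k^1$ from its tractor formula in the Einstein scale: for $k<n/2$ one then has $w\le 0$ and obtains $P_k^{\{1\}}(d\de,\de d)$, while for $k=n/2$ one has $w=1$ and obtains $d\de-\de d$ up to scale, matching $(d\de-\de d)P_k^{\emptyset}$.

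The combinatorics of which factor appears at step $i$ is governed by the weight $w_i:=k+i-n/2$ on which that factor operates. For $n$ odd every $w_i$ is a half-integer, nothing degenerates, and one obtains the uniform product $P_k^{\Ph}(d\de,\de d)$, where $P_k^{\{i\}}$ is the (possibly coefficient-degenerate but still linear) Yamabe factor of \nn{Yam}. For $n$ even a new phenomenon occurs exactly when $w\ge 1$: the two candidate factors indexed by $w$ and $w+1$ acquire a pole in their coefficients --- equivalently their product is a real quadratic that cannot be split into two linear factors of the normalised shape --- and must be replaced by the single ``squared Yamabe'' block $\widetilde P_k$ of \nn{SqYam}. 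When $k<n/2$ both indices $w,w+1$ lie in $\Ph$ (there $w+1\le\ell$), giving $\widetilde P_k(d\de,\de d)\,P_k^{\Ph\setminus\{w,w+1\}}(d\de,\de d)$; in the middle-degree case $k=n/2$ one has $w=\ell$, only the top index is affected, and the replacing factor is the conformally invariant operator $d\de-\de d$, giving $(d\de-\de d)P_k^{\Ph\setminus\{\ell\}}(d\de,\de d)$; while for $w\le 0$ with $k<n/2$ nothing is forced and one recovers $P_k^{\Ph}(d\de,\de d)$.

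I expect the main obstacle to be precisely this case analysis and its bookkeeping: one must track the weight $w_i$, which shifts by two between consecutive members of the family so that the coefficients of the peeled factor genuinely change along the induction; pin down the exact values of $i$ at which a Yamabe factor degenerates and then verify that the product of the two would-be factors is a nonzero multiple of $\widetilde P_k$ (respectively of $d\de-\de d$ when $k=n/2$); and carry the curvature constant through all the coefficients, so that the Ricci-flat specialisation required for the null-space results is recovered as the correct limit.
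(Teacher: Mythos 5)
Your proposal is correct and follows essentially the same route as the paper's proof (given for Theorem \ref{Lfactor}): iterate Theorem \ref{MM*} with $p=1$ to peel off the second-order Yamabe factors of \nn{Yam}, anchor the process with Theorem \ref{L1} (which supplies the factor $d\de-\de d$ when $k=n/2$), and insert the block $\widetilde{P}_k$ of \nn{SqYam} exactly at the indices $w,w+1$ when $n$ is even, $w\ge 1$ and $k<n/2$. The one imprecision is your account of the degeneration: the coefficients in \nn{Yam} have no poles --- rather the scalar $w-p$ on the left of Theorem \ref{MM*} vanishes when the running weight reaches $1$, forcing the $p=2$ step, and the resulting $\tmodBox_\si^{(2)}$ on $\cE^k[1]$ is not proportional to $(\tmodBox_\si)^2$ but admits the separate factorisation \nn{tmodbox2}, which is still a product of two linear factors, just not of the shape appearing in \nn{Yam}.
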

\noindent As mentioned earlier, Theorem \ref{MM*} provides the main step
needed to reach the Theorem above. Note that Theorem \ref{MM*}  is interesting
and important in its own right.

In \cite{GoSiDec} we show that if the factors $S_i:\cV\to \cV$ (for
some vector space $\cV$), in a composition $P:=S_1 \cdots S_\ell$ of
mutually commuting operators, are suitably {\em relatively
  invertible}, then the general inhomogeneous problem $Pu=f$
decomposes into an equivalent system $S_iu_i=f$, $i=1,\cdots , \ell$.
For the factors of the operators $L_k^\ell$ a sufficient form of
relative invertibility is established in Proposition \ref{identity},
in the case that the Einstein manifold is not Ricci flat.  This is
then used to reduce the generally high order conformal operators
$L_k^\ell$ to equivalent lower order systems. The outcome is that in
any signature (and without any assumption of compactness) on non-Ricci
flat Einstein manifolds we can describe the spaces $\cN(L_k^\ell)$
(the null space of $L_k^\ell$), explicitly as a direct sum of null
spaces for the second order factors of the $L_k^\ell$ (as in the
Theorem above). This is Theorem \ref{LdecGen}.  In the case of
Riemannian signature and compact manifolds the situation is vastly
simpler as the Hodge decomposition may be used in conjunction with the
Theorem \ref{main}, and so we obtain Theorem \ref{LdecSpec}.

In the Einstein setting we give a direct definition of the operators
$L_k^\ell$ in the tractor calculus, see Definition \nn{Ldef} of
Section \ref{four}. In that Section we explain the consistency of our
definition with that in \cite{BrGodeRham}.

ARG gratefully acknowledges support from the Royal
Society of New Zealand via Marsden Grant 10-UOA-113;
J\v{S} was supported by the grant agency of the Czech republic under the grant
P201/12/G028.

\section{Background: Einstein metrics and conformal geometry} \label{back}

Let $M$ be a smooth manifold, equipped with a Riemannian metric
$g_{ab}$. Here and throughout we employ Penrose's abstract index
notation. We shall write $\ce^a$ to denote the space of smooth
sections of the tangent bundle $TM$ on $M$, and $\ce_a$ for the space
of smooth sections of the cotangent bundle $T^*M$.  (In fact we will
often use the same symbols for the bundles themselves.) We write $\ce$
for the space of smooth functions and all tensors considered will be
assumed smooth without further comment.  An index which appears twice,
once raised and once lowered, indicates a contraction.  
For simplicity we shall assume that the
manifold $M$ has dimension $n\geq 3$.

We first sketch here notation and background for general conformal
structures and their tractor calculus following
\cite{CapGoamb,GoPetLap}.  Recall that a {\em conformal structure\/}
of signature $(p,q)$ on $M$ is an equivalence class $c$ of metrics,
where the equivalence relation $g\sim \widehat{g}$ of metrics in $c$
is that $\widehat{g} = f g$ for some positive function
$f$. Equivalently a conformal structure is a smooth ray subbundle
$\cq\subset S^2T^*M$ whose fibre over $x$ consists of conformally
related signature-$(p,q)$ metrics at the point $x$. Sections of $\cq$
are metrics $g$ on $M$. The principal bundle $\pi:\cq\to M$ has
structure group $\Bbb R_+$, and each representation ${\Bbb R}_+ \ni
x\mapsto x^{-w/2}\in {\rm End}(\Bbb R)$ induces a natural line bundle
on $ (M,[g])$ that we term the conformal density bundle $E[w]$. We
shall write $ \ce[w]$ for the space of sections of this bundle.  Given
a vector bundle $V$ (or its space of sections $\mathcal{V}$) we shall
write $V[w]$ (resp.\ $\mathcal{V}[w]$) to mean $V\otimes E[w]$
(resp.\ $\mathcal{V}\otimes \ce[w]$).  Here and throughout, sections,
tensors, and functions are always smooth, meaning $C^\infty$.  When no
confusion is likely to arise, we will use the same notation for a
bundle and its section space.

We write $\bg$ for the {\em conformal metric}, that is the
tautological section of $S^2T^*M\otimes E[2]$ determined by the
conformal structure. This will be used to identify $TM$ with
$T^*M[2]$.  For many calculations we will use abstract indices in an
obvious way.  Given a choice of metric $ g$ from the conformal class,
we write $ \nabla$ for the corresponding Levi-Civita connection. With
these conventions the Laplacian $ \Delta$ is given by
$\Delta=\bg^{ab}\nd_a\nd_b= \nd^b\nd_b\,$.  Note $E[w]$ is trivialised
by a choice of metric $g$ from the conformal class, and we write $\nd$
for the connection arising from this trivialisation.  It follows
immediately that (the coupled) $ \nd_a$ preserves the conformal
metric.

Since the Levi-Civita connection is torsion-free, the (Riemannian)
curvature 
$R_{ab}{}^c{}_d$ is given by $ [\nd_a,\nd_b]v^c=R_{ab}{}^c{}_dv^d $ where 
$[\cdot,\cdot]$ indicates the commutator bracket.  The Riemannian
curvature can be decomposed into the totally trace-free Weyl curvature
$C_{abcd}$ and a remaining part described by the symmetric {\em
Schouten tensor} $\Rho_{ab}$, according to $
R_{abcd}=C_{abcd}+2\bg_{c[a}\Rho_{b]d}+2\bg_{d[b}\Rho_{a]c}, $ where
$[\cdots]$ indicates antisymmetrisation over the enclosed indices.
We put $J := P^a{}_a$. The {\em
Cotton tensor} is defined by
$$
A_{abc}:=2\nabla_{[b}\Rho_{c]a} .
$$
Under a {\em conformal transformation} we replace a choice of metric $
g$ by the metric $ \hat{g}=e^{2\om} g$, where $\omega$ is a smooth
function. Explicit formulae for the corresponding transformation of
the Levi-Civita connection and its curvatures are given in e.g.\ 
\cite{GoPetLap}. We recall that, in particular, the Weyl curvature is
conformally invariant $\widehat{C}_{abcd}=C_{abcd}$.

\subsection{Conformally invariant operators on weighted forms}

Following \cite{BrGodeRham} we shall write $\ce^k$ to denote the
section space of smooth $k$-forms and $\ce^k[w]=\ce^k\otimes \ce[w]$.
Although this is similar to the notation for (weighted) tangent
sections, by context no confusion should arise.

The invariant differential operators on conformally flat manifolds
are all known. We shall refer to the summary of the classification in 
\cite[section 3]{ES}. In particular, on weighted $k$-forms $\cE^k[w]$ 
 there 
are three types of operators. 
Here we shall  mainly focus on the (power) Laplacian like operators 
$$
L_k^\ell: \cE^k[w] \to \cE^k[w-2\ell], \qquad 
k \leq \frac{n}{2},\ w = k+\ell-n/2 .
$$
Here $n$ is the dimension and $\ell \geq 1$ the order, i.e.\
$w \geq k-\frac{n}{2}+1$. That is, $w$ is an integer for $n$ even and 
a half integer for $n$ odd. Using the terminology of \cite[section 3]{ES}, 
these operators are all non--standard for $n$ odd and, assuming $n$ even, 
they are regular for $w \geq k+1$ and $w=0$ and singular in remaining cases, 
i.e.\ for $w \in \{k-\frac{n}{2}+1, \ldots, k\} \setminus \{0\}$. Further 
possible invariant operators are the exterior derivative $d$ and 
its formal adjoint $\de$
$$
d:\cE^k[0] \to \cE^{k+1}[0],\ k \leq n-1 \quad \text{and} \quad 
\de: \cE^k[-n+2k] \to \cE^{k-1}[-n+2k-2],\ k \geq 1
$$ 
of differential order 1. We extend the use of this notation to
weighted differential forms in the obvious way via the Levi-Civita
connection; for $f\in \ce^k[w]$ we write $df $ and $\delta f$ to mean
$$
(k+1)\nabla_{[a_0} f_{a_1\cdots a_k]} \quad \mbox{and} \quad  - \nabla^{a_1} f_{a_1\cdots a_k},
$$ 
respectively.

Finally, for integers $w \geq k+1$, $k \geq 1$ and $w' \geq 1$ there are 
overdetermined operators 
$$
\cE^k[w] \to \underbrace{\cE^1 \otimes \ldots \otimes \cE^1}_{w-k}
\otimes \cE^k[w] 
\quad \mbox{and} \quad
\cE[w'] \to \underbrace{\cE^1 \otimes \ldots \otimes \cE^1}_{w'+1}[w']
$$ 
of differential order $w-k$ and $w'+1$, respectively. More
precisely, the target bundle is the Cartan component of the displayed
space. These operators are regular and are a class of what are known
as first BGG operators.

\subsection{Conformal geometry and tractor calculus}\label{tractorsect}

A central tool in the treatment of conformal geometry is tractor
calculus \cite{BEGo}, since this is a conformally invariant replacement
of the Ricci calculus of pseudo-Riemannian geometry. (For a general
development of tractor calculus in the broader context of all parabolic
geometries see \cite{CapGotrans}). The discussion here follows
\cite{GoPetLap} and for the treatment of forms
\cite{BrGodeRham,GoSiKil,Sthesis} as summarised in \cite{GoSiEHarm}.
Some parts of the treatment are specialised to Einstein manifolds.

We first recall the definition of the standard tractor bundle over
$(M,[g])$.  This is a vector bundle of rank $n+2$ defined, for each
$g\in[g]$, by $[\ce^A]_g=\ce[1]\oplus\ce_a[1]\oplus\ce[-1]$.  If $\wh
g=e^{2\Up}g$, we identify $(\alpha,\mu_a,\tau)\in[\ce^A]_g$ with
$(\wh\alpha,\wh\mu_a,\wh\tau)\in[\ce^A]_{\wh g}$ by the transformation
\begin{equation}\label{transf-tractor}
 \begin{pmatrix}
 \wh\alpha\\ \wh\mu_a\\ \wh\tau
 \end{pmatrix}=
 \begin{pmatrix}
 1 & 0& 0\\
 \Up_a&\delta_a{}^b&0\\
- \tfrac{1}{2}\Up_c\Up^c &-\Up^b& 1
 \end{pmatrix} 
 \begin{pmatrix}
 \alpha\\ \mu_b\\ \tau
 \end{pmatrix} ,
\end{equation}
where $\Up_a:=\nd_a \Up$.  It is straightforward to verify that these
identifications are consistent upon changing to a third metric from
the conformal class, and so taking the quotient by this equivalence
relation defines the {\em standard tractor bundle} $\ct$, or $\ce^A$
in an abstract index notation, over the conformal manifold.
(Alternatively the standard tractor bundle may be constructed as a
canonical quotient of a certain 2-jet bundle or as an associated
bundle to the normal conformal Cartan bundle \cite{luminy}.) On a
conformal structure of signature $(p,q)$, the bundle $\ce^A$ admits an
invariant metric $ h_{AB}$ of signature $(p+1,q+1)$ and an invariant
connection, which we shall also denote by $\nabla_a$, preserving
$h_{AB}$.  In a conformal scale $g$, these are given by
\begin{equation}\label{basictrf}
 h_{AB}=\begin{pmatrix}
 0 & 0& 1\\
 0&\bg_{ab}&0\\
1 & 0 & 0
 \end{pmatrix}
\text{ and }
\nabla_a\begin{pmatrix}
 \alpha\\ \mu_b\\ \tau
 \end{pmatrix}
 =
\begin{pmatrix}
 \nabla_a \alpha-\mu_a \\
 \nabla_a \mu_b+ \bg_{ab} \tau +\Rho_{ab}\alpha \\
 \nabla_a \tau - \Rho_{ab}\mu^b  \end{pmatrix}. 
\end{equation}
It is readily verified that both of these are conformally well-defined,
i.e., independent of the choice of a metric $g\in [g]$.  Note that
$h_{AB}$ defines a section of $\ce_{AB}=\ce_A\otimes\ce_B$, where
$\ce_A$ is the dual bundle of $\ce^A$. Hence we may use $h_{AB}$ and
its inverse $h^{AB}$ to raise or lower indices of $\ce_A$, $\ce^A$ and
their tensor products.

In computations, it is often useful to introduce 
the `projectors' from $\ce^A$ to
the components $\ce[1]$, $\ce_a[1]$ and $\ce[-1]$ which are determined
by a choice of scale.
They are respectively denoted by $X_A\in\ce_A[1]$, 
$Z_{Aa}\in\ce_{Aa}[1]$ and $Y_A\in\ce_A[-1]$, where
 $\ce_{Aa}[w]=\ce_A\otimes\ce_a\otimes\ce[w]$, etc.
 Using the metrics $h_{AB}$ and $\bg_{ab}$ to raise indices,
we define $X^A, Z^{Aa}, Y^A$. Then we
immediately see that 
$$
Y_AX^A=1,\ \ Z_{Ab}Z^A{}_c=\bg_{bc} ,
$$
and that all other quadratic combinations that contract the tractor
index vanish. 
In \eqref{transf-tractor} note that  
$\wh{\alpha}=\alpha$ and hence $X^A$ is conformally invariant. 

Given a choice of conformal scale, the {\em tractor-$D$ operator} 
$D_A\colon\ce_{B \cdots E}[w] \to \cE_{AB\cdots E}[w-1]$
is defined by 
\begin{equation}\label{tractorD}
D_A V:=(n+2w-2)w Y_A V+ (n+2w-2)Z_{Aa}\nabla^a V -X_A\Box V, 
\end{equation} 
where $\Box V :=\Delta V+w \J V$.  This also turns out to be
conformally invariant as can be checked directly using the formulae
above (or alternatively there are conformally invariant constructions
of $D$, see e.g.\ \cite{Gosrni}).

The curvature $ \Omega$ of the tractor connection 
is defined by 
$$
[\nd_a,\nd_b] V^C= \Omega_{ab}{}^C{}_EV^E 
$$
for $ V^C\in\ce^C$.  Using
\eqref{basictrf} and the formulae for the Riemannian curvature yields
\begin{equation}\label{tractcurv}
\Omega_{abCE}= Z_C{}^cZ_E{}^e C_{abce}-2X_{[C}Z_{E]}{}^e A_{eab}
\end{equation}

We will also need a conformally invariant curvature quantity defined as
follows (cf.\ \cite{Gosrni,Goadv})
\begin{equation}\label{Wdef}
W_{BC}{}^E{}_F:=
\frac{3}{n-2}D^AX_{[A} \Omega_{BC]}{}^E{}_F ,
\end{equation}
where $\Omega_{BC}{}^E{}_F:= Z_B{}^bZ_C{}^c \Om_{bc}{}^E{}_F$.
In a choice of conformal scale, 
 $W_{ABCE}$ is given by
\begin{equation}\label{Wform}
\begin{array}{l}
(n-4)\left( Z_A{}^aZ_B{}^bZ_C{}^cZ_E{}^e C_{abce}
-2 Z_A{}^aZ_B{}^bX_{[C}Z_{E]}{}^e A_{eab}\right. \\ 
\left.-2 X_{[A}Z_{B]}{}^b Z_C{}^cZ_E{}^e A_{bce} \right)
+ 4 X_{[A}Z_{B]}{}^b X_{[C} Z_{E]}{}^e B_{eb},
\end{array}
\end{equation}
where 
$$
B_{ab}:=\nabla^c
A_{acb}+\Rho^{dc}C_{dacb}.
$$ is known as the {\em Bach tensor}. From the formula \nn{Wform} it
is clear that $W_{ABCD}$ has Weyl tensor type symmetries.

We will work with conformally Einstein manifolds. That is, conformal
structures with an Einstein metric in the conformal class. This is the
same as the existence of a non-vanishing section $\si \in
\mathcal{E}[1]$ satisfying $\left[ \na_{(a}\na_{b)_0} + P_{(ab)_0}
  \right] \si =0$ where the subscript $(\ldots)_0$ indicates the
trace-free symmetric part.  Equivalently (see e.g.\ \cite{BEGo,GoNur})
there is a standard tractor $I_A$ that is parallel with respect to the
normal tractor connection $\na$ and such that $\si:=X_AI^A$ is
non-vanishing. It follows that $I_A := \frac{1}{n}D_A \si = Y_A \si
+Z_A^a \na_a \si -\frac{1}{n} X_A (\De+J) \si$, for some section
$\si\in \ce[1]$, and so $X^AI_A=\si$ is non-vanishing. If we compute
in the scale $\si$, then the Cotton and Bach tensors are zero (see
e.g.\ \cite{GoNur}) and so $W_{ABCD} = (n-4) Z_A^a Z_B^b Z_C^c Z_D^d
C_{abcd}$.

\subsection{Tractor forms} Here we recall the calculus for tractor forms 
as developed in  \cite{GoSiEHarm}. We
write $\cE^k[w]$ for the space of sections of $(\Lambda^k T^*M)
\otimes E[w]$ (and $\ce^k= \ce^k[0]$).  Further we put $\cE_k[w] :=
\cE^k[w+2k-n]$. 
We shall use the analogous notation $\cT^k[w]
:= (\Lambda^k \cT) \otimes \cE[w]$ on the tractor level.

In order to be
explicit and efficient in calculations involving bundles of possibly
high rank it is necessary to employ abstract index notation
as follows.  
In the usual abstract index conventions one would write
$\ce_{[ab\cdots c]}$ (where there are implicitly $k$-indices skewed
over) for the space $\ce^k$. To simplify subsequent expressions we
 use the following conventions. Firstly
indices labelled with sequential superscripts which are
at the same level (i.e.\ all contravariant or all
covariant) will indicate a completely skew set of indices.
Formally we set $a^1 \cdots a^k = [a^1 \cdots a^k]$ and so, for example,
$\ce_{a^1 \cdots a^k}$ is an alternative notation for $\ce^k$
while $\ce_{a^1 \cdots a^{k-1}}$ and $\ce_{a^2 \cdots a^k}$ both denote
$\ce^{k-1}$. Next, following \cite{GoSiKil} we
abbreviate this notation via multi-indices: We will use the forms
indices
$$
\begin{aligned}
 \vec{a}^k &:=a^1 \cdots a^k =[a^1 \cdots a^k], \quad k \geq 0,\\
\dot{\vec{a}}^k &:= a^2 \cdots a^k=[a^2 \cdots a^k], \quad k \geq 1.
\end{aligned}
$$
If $k=1$ then $\dform{a}^k$ simply means the index is absent.
The corresponding notations will be used for tractor indices so
e.g. the bundle of tractor $k$--forms $\ce_{[A^1\cdots A^k]}$ will be
denoted by $\ce_{A^1\cdots A^k}$ or $\mathcal{E}_{\vec{A}^k}$.

The structure of $\mathcal{E}_{\vec{A}^k}$ is
\begin{equation} \label{comp_series_form}
  \mathcal{E}_{[A^1 \cdots A^k]} = \mathcal{E}_{\vec{A}^k} \simeq
  \mathcal{E}^{k-1}[k] \lpl \left( \mathcal{E}^k[k] \oplus
  \mathcal{E}^{k-2}[k-2] \right) \lpl \mathcal{E}^{k-1}[k-2];
\end{equation}
in a choice of scale the semidirect sums $\lpl$ may be replaced by
direct sums and otherwise they indicate the composition series
structure arising from the tensor powers of \nn{transf-tractor}.

In a choice of metric $g$ from the conformal class, the
projectors (or splitting operators) $X,Y,Z$ for $\mathcal{E}_A$
determine corresponding projectors $\X,\Y,\Z,\W$ for
$\mathcal{E}_{\vec{A}^{k+1}}$, $k \geq 1$ 
These execute the  splitting of this space into four components and are given 
as follows.
\begin{center}
\renewcommand{\arraystretch}{1.3}
\begin{tabular}{c@{\;=\;}l@{\;=\;}l@{\;=\;}l@{\ $\in$\ }l}
$\Y^k$ & $\Y_{A^0A^1 \cdots A^k}^{\quad a^1 \cdots\, a^k}$ &
  $\Y_{A^0\vec{A}^k}^{\quad \vec{a}^k}$ & $Y_{A^0}^{}Z_{A^1}^{a^1} \cdots Z_{A^k}^{a^k}$ &
  $\mathcal{E}_{\vec{A}^{k+1}}^{\vec{a}^k}[-k-1]$ \\
$\Z^k$ & $\Z_{A^1 \cdots A^k}^{\, a^1 \cdots\, a^k}$ &
  $\Z_{\vec{A}^k}^{\,\vec{a}^k}$ & $Z_{A^1}^{\,a^1} \cdots Z_{A^k}^{\,a^k}$ &
  $\mathcal{E}_{\vec{A}^k}^{\vec{a}^k}[-k]$ \\
$\W^k$ & $\W_{A'A^0A^1 \cdots A^k}^{\quad\,\ \ a^1 \cdots\, a^k}$ &
  $\W_{A'A^0\vec{A}^k}^{\quad\,\ \ \vec{a}^k}$ &
  $X_{[A'}^{}Y_{A^0}^{}Z_{A^1}^{\,a^1} \cdots Z_{A^k]}^{\,a^k}$ &
  $\mathcal{E}_{\vec{A}^{k+2}}^{\vec{a}^k}[-k]$ \\
$\X^k$ & $\X_{A^0A^1 \cdots A^k}^{\quad a^1 \cdots\, a^k}$ &
  $\X_{A^0\vec{A}^k}^{\quad \vec{a}^k}$ & $X_{A^0}^{}Z_{A^1}^{\,a^1} \cdots Z_{\,A^k}^{a^k}$ &  
  $\mathcal{E}_{\vec{A}^{k+1}}^{\vec{a}^k}[-k+1]$
\end{tabular}
\end{center}
where $k \geq 0$. The superscript $k$ in $\Y^k$, $\Z^k$, $\W^k$
and $\X^k$ shows the corresponding tensor valence. (This is
slightly different than in \cite{BrGodeRham}, where $k$ is
the relevant tractor valence.) Note that $Y=\Y^0$, $Z=\Z^1$ and $X=\X^0$ and
$\W^0 = X_{[A'}Y_{A^0]}$. To simplify notation we introduce 
projectors/injectors
\begin{align*}
&q^k: \cT^k[w] \to \cE^k[w+k], \quad F_\form{A}^{} \mapsto 
\Z^\form{A}_{\,\form{a}} F_\form{A} \ \ \text{for}\ \ F \in \cT^k \quad 
\text{and} \\
&q_k: \,\cE^k[w] \to \cT^k[w-k], \quad f_\form{a}^{} \mapsto 
\Z^{\,\form{a}}_{\form{A}} f_\form{a} \quad \ \text{for}\ \ f \in \cE^k.
\end{align*}

From \nn{basictrf} we immediately see $\na_p Y_A = Z_A^a P_{pa}$, 
$\na_p Z_A^a = -\delta_p^a Y_A - P_p^a X_A$ and $\na_p X_A = Z_{Ap}$.
From this we obtain 
the formulae (cf.\ \cite{GoSiKil}) 
\begin{equation} \label{na}
\begin{split}
  \na_p \Y_{A^0\form{A}^k}^{\quad \form{a}^k} &=
      P_{pa_0} \Z_{A^0\form{A}^k}^{\,a^0\,\form{a}^k}
    + k P_p^{\ a^1} \W_{A^0\form{A}^k}^{\quad \dot{\form{a}}^k} \\
  \na_p \Z_{A^0\form{A}^k}^{\,a^0\,\form{a}^k} &=
    - (k+1) \de_p^{a^0} \Y_{A^0\form{A}^k}^{\quad \form{a}^k}
    - (k+1) P_p^{\ a^0}  \X_{A^0\form{A}^k}^{\quad \form{a}^k} \\
  \na_p \W_{A^0\form{A}^k}^{\quad\, \dot{\form{a}}^k} &=
    - \bg_{pa^1} \Y_{A^0\form{A}^k}^{\quad \form{a}^k}
    + P_{pa^1} \X_{A^0\form{A}^k}^{\,\ \ a^1\!\dot{\form{a}}^k} \\
  \na_p \X_{A^0\form{A}^k}^{\quad \form{a}^k} &=
      \bg_{pa^0} \Z_{A^0\form{A}^k}^{\,a^0\,\form{a}^k}
    - k \de_p^{a^1} \W_{A^0\form{A}^k}^{\quad \dot{\form{a}}^k},
\end{split}
\end{equation}
which determine the tractor connection on form tractors in a conformal
scale. Similarly, one can compute the Laplacian $\De$ applied to the
tractors $\X$, $\Y$, $\Z$ and $\W$.  As an operator on form tractors
we have the opportunity to modify $\De$ by adding some amount of
$W\sharp\sharp$, where $\sharp $ denotes the natural tensorial
action of sections in $\End(\ce^A)$. Analogously, we shall use
$C \sharp\sharp$ to modify the Laplacian on forms; here $\sharp$ 
denotes the natural tensorial action of sections in $\End(\ce^a)$.
It turns out
(cf.\ \cite{BrGodeRham}) that it will be convenient for us to use modifications
\begin{equation} \label{modgen} 
\modDe = \De + \frac{1}{n-4} W\sharp\sharp \quad 
\text{and} \quad \modD_A = D_A - \frac{1}{n-4} X_A W\sharp\sharp \qquad
\text{for} \quad
n \not= 4,
\end{equation}
cf.\ \nn{tractorD}. (Note $\De = \na^a \na_a$.) The operator
$\modD$ was introduced in \cite{BrGodeRham}. 

Since the Laplacian is of the second order, it is  convenient 
to consider e.g.\ 
$\modDe \Y_{\form{A}}^{\,\dform{a}} \tau_{\dform{a}}$ where 
$\tau_{\dform{a}} \in \cE_{\dform{a}}[w]$. It will be sufficient for our 
purpose
to calculate this only in an Einstein scale. 
For example, using \nn{na} and then that $P_{ab}=\bg_{ab}J/n$, we have 
\begin{align*}
\na^p \na_p \Y_{\form{A}}^{\,\dform{a}} \tau_{\dform{a}} 
   =& \na^p \bigl[ 
    P_{pa^1} \Z_{\form{A}}^{\form{a}}
    + (k-1) P_p^{\ a^2} \W_{\form{A}}^{\ddot{\form{a}}}
    + \Y_{\form{A}}^{\,\dform{a}} \na_p  \bigr]\tau_{\dform{a}} \\
   =&-\Y_{\form{A}}^{\,\dform{a}}
     \bigl[ \bigl( \de d + d\de + (1\!-\!\frac{2(k\!-\!1)(n\!-\!k\!+\!1)}{n}) J
             +C \sharp\sharp \bigr)\tau \bigr]_{\dform{a}}   \\
    &+\frac{2}{nk} \Z_{\form{A}}^{\,\form{a}} (Jd\tau)_{\form{a}}
     -\frac{2(k\!-\!1)}{n} \W_{\form{A}}^{\,\ddform{a}} (J\de\tau)_{\ddform{a}} 
     -\frac{n\!-\!2k\!+\!2}{n^2} \X_{\form{A}}^{\,\dform{a}}
       J^2\tau_{\dform{a}}, 
\end{align*}
where, as usual, $\form{A}=\form{A}^k$ and $\form{a}=\form{a}^k$.
Summarising, one can compute that in
an Einstein scale we obtain 
\begin{equation} \label{dde-W}
\begin{split}
  - \modDe \Y_{\form{A}}^{\,\dform{a}} \tau_{\dform{a}} &= 
     \Y_{\form{A}}^{\,\dform{a}}
     \bigl[ \bigl( \de d + d\de + (1\!-\!\frac{2(k\!-\!1)(n\!-\!k\!+\!1)}{n}) J
            \bigr)\tau \bigr]_{\dform{a}}   \\
    &-\frac{2}{nk} \Z_{\form{A}}^{\,\form{a}} (Jd\tau)_{\form{a}}
     +\frac{2(k\!-\!1)}{n} \W_{\form{A}}^{\,\ddform{a}} (J\de\tau)_{\ddform{a}} 
     +\frac{n\!-\!2k\!+\!2}{n^2} \X_{\form{A}}^{\,\dform{a}}
       J^2\tau_{\dform{a}} \\
  - \modDe \Z_{\form{A}}^{\,\form{a}} \mu_{\form{a}} &= 
     -2k \Y_{\form{A}}^{\,\dform{a}} (\de\mu)_{\dform{a}} 
     +\Z_{\form{A}}^{\,\form{a}} \bigl[ \bigl(
       \de d + d\de - \frac{2k(n\!-\!k\!-\!1)}{n} J
       \bigr)\mu \bigr]_{\form{a}} \\
    &-\frac{2k}{n} \X_{\form{A}}^{\,\dform{a}} (J\de\mu)_{\dform{a}} \\
  - \modDe \W_{\form{A}}^{\,\ddform{a}} \nu_{\ddform{a}} &= 
       \frac{2}{k\!-\!1} \Y_{\form{A}}^{\,\dform{a}} (d\nu)_{\dform{a}} 
    +\W_{\form{A}}^{\,\ddform{a}} \bigl[ \bigl(
      \de d + d\de - \frac{2(k\!-\!3)(n\!-\!k\!+\!2)}{n}  J
       \bigr) \nu \bigr]_{\ddform{a}} \\
    &-\frac{2}{n(k\!-\!1)} \X_{\form{A}}^{\,\dform{a}} (d\nu)_{\dform{a}} \\
  - \modDe \X_{\form{A}}^{\,\dform{a}} \rh_{\dform{a}} &= 
     (n\!-\!2k\!+\!2) \Y_{\form{A}}^{\,\dform{a}} \rh_{\dform{a}} 
     -2(k\!-\!1) \W_{\form{A}}^{\,\ddform{a}} (\de\rh)_{\ddform{a}} \\ 
    &-\frac{2}{k} \Z_{\form{A}}^{\,\form{a}} (d\rh)_{\form{a}} 
     +\X_{\form{A}}^{\,\dform{a}} \bigl[ \bigl(
       \de d + d\de + (1\!-\!\frac{2(k\!-\!1)(n\!-\!k\!+\!1)}{n}) J
       \bigr) \rh \bigr]_{\dform{a}}.
\end{split}
\end{equation}
cf.\ \cite[(11)]{GoSiEHarm}.  Here $\tau_{\dform{a}}
\in \cE_{\dform{a}}[w]$, $\mu_{\form{a}} \in \cE_{\form{a}}[w]$,
$\nu_{\ddform{a}} \in \cE_{\ddform{a}}[w]$ and $\rh_{\dform{a}} \in
\cE_{\dform{a}}[w]$ where $\form{a} = \form{a}^k$, $k \geq 1$ and $w$
is any conformal weight. Note $\modDe$ is defined in \nn{modgen} for 
dimensions $n \not=4$ and under this assumption, \nn{dde-W} was stated in 
\cite{GoSiEHarm}. Below we extend the operator $\modDe$ also to the
dimension $n=4$, see \nn{modeinst}, and a short computation verifies that
\nn{dde-W} holds in the dimension $n=4$ as well. Summarising, 
\nn{dde-W} holds for all dimensions $n \geq 3$.

\subsection{Modified tractor $D$-operator on conformally Einstein manifolds}\label{mod}
The operator $\modD$ will be essential for our subsequent computation. It
is defined above only for $n \not=4$. Assuming $M$ is a conformally Einstein 
manifold, we extend this operator to the dimension $n=4$ as follows.
First we define the tractor
\begin{align} \label{Wform4}
\begin{split}
\widetilde{W}^\si_{ABCD} =  &\Z_{AB}^{\;a\;b} \Z_{CD}^{\;c\;d} C_{abcd}
-4 \Z_{AB}^{\;a\;b} \X_{CD}^{\ \ d} \na_{[a}P_{b]d} \\
&-4 \X_{AB}^{\ \ b} \Z_{CD}^{\;c\;d} \na_{[c}P_{d]b}
-8 \X_{AB}^{\ \ b} \X_{CD}^{\ \ d} \si^{-1} (\na_{[b}P_{a]d}) \na^a\si
\end{split}
\end{align}
in the scale $\si \in \cE[1]$. This (scale dependent quantity) was
used in \cite[section 4]{GoEinst} and it is shown there that
$\widetilde{W}^{\si_1}_{ABCD} = \widetilde{W}^{\si_2}_{ABCD}$ for any
pair of Einstein scales $\si_1$ and $\si_2$. Hence we can drop the
superscript $\si$ on conformally Einstein manifolds as
$\widetilde{W}_{ABCD}$ is well defined on such structures. 

Using $\widetilde{W}_{ABCD}$, we define new modifications
\begin{equation} \label{modeinst} 
\modDe = \De + \widetilde{W}\sharp\sharp \quad 
\text{and} \quad \modD_A = D_A - X_A \widetilde{W}\sharp\sharp.
\end{equation}
In particular, this definition covers the case $n=4$. The two definitions
of $\modDe$ and $\modD$ in \nn{modgen} and \nn{modeinst} for $n \not=4$
are consistent; the following
lemma follows from \cite[Section 4]{GoEinst}.

\begin{lemma}
On conformally Einstein manifolds  we have
$(n-4)\widetilde{W}_{ABCD} = W_{ABCD}$ for $n\not=4$. 
Therefore the operators $\modD$ and $\modDe$ defined in
\nn{modeinst} agree, in dimension $n\not=4$, with the operators
denoted by same symbols in expression \nn{modgen}.
\end{lemma}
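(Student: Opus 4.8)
The plan is to prove the identity $(n-4)\widetilde{W}_{ABCD} = W_{ABCD}$ for $n \neq 4$ by a direct comparison of the two explicit formulae in a choice of Einstein scale $\si$. First I would recall from \nn{Wform} that, in the scale $\si$, the Cotton tensor $A_{abc}$ and the Bach tensor $B_{ab}$ both vanish, so that $W_{ABCD}$ collapses to the single term $(n-4)\, Z_A{}^a Z_B{}^b Z_C{}^c Z_E{}^e C_{abce}$, and hence $\tfrac{1}{n-4}W_{ABCD} = Z_A{}^a Z_B{}^b Z_C{}^c Z_E{}^e C_{abce}$ in that scale. On the other hand, in the Einstein scale the terms $\na_{[a}P_{b]d}$ appearing in \nn{Wform4} are, up to the antisymmetrisation, essentially the Cotton tensor $A$ up to scale-dependent corrections, and the last term involving $\si^{-1}(\na_{[b}P_{a]d})\na^a\si$ must be handled using the Einstein condition $\na_{(a}\na_{b)_0}\si + P_{(ab)_0}\si = 0$. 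So the key computational step is to show that in an Einstein scale the correction terms in $\widetilde{W}^\si_{ABCD}$ beyond the leading $\Z\Z C$ term all vanish, leaving $\widetilde{W}_{ABCD} = \Z_{AB}^{\;a\;b}\Z_{CD}^{\;c\;d}C_{abcd}$, which matches $\tfrac{1}{n-4}W_{ABCD}$.

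The cleanest route, and the one I would actually take, is to invoke \cite[Section 4]{GoEinst} directly, since the statement explicitly says the lemma follows from that reference. Concretely, the quantity $\widetilde{W}^\si_{ABCD}$ is defined in \nn{Wform4} precisely so as to coincide with $W_{ABCD}/(n-4)$ on Einstein manifolds for $n \neq 4$; this is the content of the computations in \cite{GoEinst}. Thus the first half of the lemma is a matter of citing that the scale-dependent expression \nn{Wform4}, which \cite{GoEinst} shows is independent of the choice of Einstein scale, agrees with the conformally invariant $W_{ABCD}$ divided by $n-4$ whenever that makes sense. I would state this comparison as the central assertion and attribute the underlying identity to \cite{GoEinst}.

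The second half of the lemma — that $\modD$ and $\modDe$ of \nn{modeinst} agree with those of \nn{modgen} for $n \neq 4$ — is then immediate from the first half together with the definitions. Indeed, \nn{modgen} sets $\modDe = \De + \tfrac{1}{n-4}W\sharp\sharp$ and $\modD_A = D_A - \tfrac{1}{n-4}X_A W\sharp\sharp$, while \nn{modeinst} sets $\modDe = \De + \widetilde{W}\sharp\sharp$ and $\modD_A = D_A - X_A \widetilde{W}\sharp\sharp$; substituting $\widetilde{W}_{ABCD} = \tfrac{1}{n-4}W_{ABCD}$ into the latter reproduces the former verbatim. Since the $\sharp\sharp$ action is linear in the curvature tractor, equality of the curvature tractors forces equality of the modified operators, and no further argument is needed.

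The main obstacle, if one insists on a self-contained proof rather than citing \cite{GoEinst}, is the verification that the cross terms in \nn{Wform4} involving $\si^{-1}\na^a\si$ conspire with the $\na_{[a}P_{b]d}$ terms to produce a scale-independent object matching $W/(n-4)$; this requires carefully tracking the conformal transformation of $P_{ab}$ and using the Einstein equation $\na_a\na_b\si = -P_{ab}\si + \tfrac{1}{n}(\De\si + J\si)\bg_{ab}$ to rewrite $\na^a\si$-weighted terms, and is exactly the delicate bookkeeping that \cite{GoEinst} already carried out. I would therefore present the proof as a short deduction from that reference rather than reproduce the calculation.
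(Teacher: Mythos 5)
Your proposal is correct and matches the paper, which likewise gives no independent argument but simply notes that the identity $(n-4)\widetilde{W}_{ABCD}=W_{ABCD}$ follows from \cite[Section 4]{GoEinst}, the second assertion then being immediate by substituting this into \nn{modeinst} to recover \nn{modgen}. Your sketch of the direct check in an Einstein scale (where the Cotton and Bach terms in \nn{Wform} vanish and \nn{Wform4} reduces to its Weyl term, so that comparing the two well-defined tractors in that one scale suffices) is also consistent with the reductions the paper records elsewhere.
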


To write explicitly the commutator $[\modD_A,\modD_B]$ on density valued
tractor fields, we shall need the following operator introduced in \cite{Goadv}.
Recall that sequentially labelled indices are assumed to be skew over,
e.g.\ $A^1A^2 = [A^1A^2]$. We put
\begin{equation} \label{doubleD}
D_{A^1A^2} = -2( w\W_{A^1A^2} + \X_{A^1A^2}^{\quad a} \na_a)
\end{equation}
Using this, one computes
\begin{gather} \label{[modD,modD]}
[\modD_A,\modD_B] =  (n+2w-2) \bigl[
(n+2w-4) \widetilde{W}_{AB} \sharp  - (D_{AB} \widetilde{W}) \sharp\sharp 
\bigr]. 
\end{gather}
on any density valued tractor field. Since we can use the definition \nn{modgen}
for $\modD$, in the case $n \not= 4$, the previous display follows from
\cite[(13)]{GoSiEHarm} for such dimensions. A direct computation then verifies
the case $n=4$.

\begin{lemma} \label{IIDD}
Let $I^A, \bar{I}^A \in \cE^A$ be two parallel tractors.  Then
$I^A \bar{I}^B[\modD_A,\modD_B]=0$ on any density valued tractor fields.
\end{lemma}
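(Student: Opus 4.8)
The plan is to use the explicit commutator formula \nn{[modD,modD]} and exploit the fact that $I^A$ and $\bar I^B$ are parallel. Contracting \nn{[modD,modD]} with $I^A\bar I^B$ gives
$$
I^A\bar I^B[\modD_A,\modD_B] = (n+2w-2)\bigl[(n+2w-4)\,(I^A\bar I^B\widetilde W_{AB})\sharp - (I^A\bar I^B D_{AB}\widetilde W)\sharp\sharp\bigr],
$$
so it suffices to show that both $I^A\bar I^B\widetilde W_{AB}=0$ and $I^A\bar I^B D_{AB}\widetilde W=0$ (as endomorphism-valued quantities acting by $\sharp$ and $\sharp\sharp$ respectively). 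Here $\widetilde W_{AB}$ denotes the relevant skew pair of indices of $\widetilde W_{ABCD}$.

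The first vanishing should follow from the algebraic symmetries of $\widetilde W_{ABCD}$. By the remark following \nn{Wform} (and its analogue for $\widetilde W$ via the Lemma preceding Lemma \ref{IIDD}), $\widetilde W_{ABCD}$ has Weyl-tensor-type symmetries; in particular it is totally trace-free and, working in an Einstein scale $\si$, $\widetilde W_{ABCD}$ reduces essentially to $\Z_A{}^a\Z_B{}^b\Z_C{}^c\Z_D{}^d C_{abcd}$ up to the lower-slot corrections in \nn{Wform4}. Since $I^A$ is parallel with $\si=X_AI^A$, in the scale $\si$ we have $I^A = Y^A\si$ (the $Z$ and $X$ components of $I_A = \tfrac1n D_A\si$ vanish in the Einstein scale because $\na_a\si=0$ and $(\De+J)\si = \tfrac{n}{?}\cdots$ — more precisely $I_A$ is a multiple of $Y_A$ plus $X_A$ in that scale). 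Because $X^A\Z_A{}^a = 0$ and $Y^A\Z_A{}^a = 0$ while $X^AX_A$-type and $Y^AX_A$ contractions hit only the $\X$- and $\W$-slots, one checks slot by slot in \nn{Wform4} that every term in $I^A\bar I^B\widetilde W_{ABCD}$ vanishes: the $C_{abcd}$ term is killed by $Y^A\Z_A{}^a = 0$, and the $\X$-bearing terms are killed because they require an $X$-contraction into the $A$ or $B$ slot which, paired against a parallel tractor built from $Y$ and $X$ in the $\si$ scale, still yields zero once one uses $X^AX_A=0$ and the skew-symmetrisation in \nn{Wform4}. Thus $I^A\bar I^B\widetilde W_{AB} = 0$.

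The second vanishing, $I^A\bar I^B D_{AB}\widetilde W = 0$, follows from the first together with the definition \nn{doubleD}: $D_{AB} = -2(w\W_{AB} + \X_{AB}{}^a\na_a)$. Contracting with the parallel tractors and pulling $I^A,\bar I^B$ through $\na_a$ (legitimate since they are parallel), we get $I^A\bar I^B D_{AB}\widetilde W = -2\bigl(w\,(I^A\bar I^B\W_{AB})\widetilde W + (I^A\bar I^B\X_{AB}{}^a)\na_a\widetilde W\bigr)$. In the Einstein scale $\widetilde W_{ABCD}$ is built from $C_{abcd}$ (and is parallel-compatible in the relevant sense — indeed $\na_a\widetilde W$ contributes via the Cotton/Bach-type slots which vanish in the Einstein scale), and the coefficients $I^A\bar I^B\W_{AB}$ and $I^A\bar I^B\X_{AB}{}^a$ are scalar/vector multiples of $\si$-dependent quantities that one computes directly: since $\W_{AB} = X_{[A}Y_{B]}$ and $\X_{AB}{}^a = X_{[A}Z_{B]}{}^a$ in the $\si$ scale, contracting with $I^A\propto Y^A$, $\bar I^B\propto Y^B$ gives $I^A\bar I^B\W_{AB} \propto Y^{[A}Y^{B]}X_{[A}Y_{B]} = 0$ by skew-symmetry, and $I^A\bar I^B\X_{AB}{}^a\propto Y^{[A}Y^{B]}X_{[A}Z_{B]}{}^a = 0$ likewise. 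Hence $D_{AB}\widetilde W$ also annihilates against $I^A\bar I^B$, and the commutator vanishes.

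The main obstacle is bookkeeping the tractor slot contractions carefully in \nn{Wform4}: one must track which of $X^A$, $Y^A$, $Z^{Aa}$ survive when paired against the parallel tractors $I^A$ (which in a non-Einstein scale is the full $Y_A\si + Z_A^a\na_a\si - \tfrac1n X_A(\De+J)\si$), and confirm the skew-symmetrisations $[A'A^0A^1\cdots]$ built into $\W$ and $\X$ do not resurrect a surviving term. A clean way to avoid scale-chasing is to observe that $\widetilde W_{ABCD}$, having Weyl-type symmetries and being annihilated by $I^A$ in one (hence every) scale by conformal invariance of the full contraction, gives $I^A\widetilde W_{ABCD}=0$ outright; then $I^A\bar I^B\widetilde W_{AB}=0$ and $I^A\bar I^B D_{AB}\widetilde W=0$ both follow immediately, the latter since $D_{AB}$ differentiates and $I^A$, $\bar I^B$ are parallel so $I^A\bar I^B D_{AB}\widetilde W = D_{AB}(I^A\bar I^B\widetilde W\cdots)$ up to the algebraic $\W_{AB}$-term which again vanishes by the skew-symmetry argument above.
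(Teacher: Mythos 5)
There is a genuine gap in your treatment of the second term of \nn{[modD,modD]}. You reduce $I^A\bar I^B(D_{AB}\widetilde W)\sharp\sharp=0$ to the claims $I^A\bar I^B\W_{AB}=0$ and $I^A\bar I^B\X_{AB}{}^{a}=0$, justified by taking $I^A\propto Y^A$, $\bar I^B\propto Y^B$ in the Einstein scale and appealing to skew-symmetry. This fails: in the scale $\si$ one has $I_A=\si Y_A-\tfrac{J\si}{n}X_A$ (the $X$-slot does not vanish), and, much more seriously, the \emph{second} parallel tractor retains its full expansion $\bar I_A=\bar\si Y_A+Z_A^{\,a}\na_a\bar\si-\tfrac1nX_A(\De+J)\bar\si$ in the scale $\si$. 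A direct computation then gives $I^A\bar I^B\X_{AB}{}^{a}=\tfrac12\si\na^a\bar\si$ and $I^A\bar I^B\W_{AB}=-\tfrac{1}{2n}\si\De\bar\si$, both generically nonzero precisely when $\bar\si$ is an Einstein scale genuinely different from $\si$. Your skew-symmetry argument only covers $\bar I\propto I$, where the statement is trivial anyway since the commutator is skew in $A,B$; but the lemma is needed exactly for two distinct parallel tractors (Theorem \ref{indep}). The closing ``clean way'' is also invalid: in $(D_{AB}\widetilde W)\sharp\sharp$ the indices $A,B$ belong to the operator $D_{AB}$ of \nn{doubleD}, not to $\widetilde W$, so $I^A\bar I^B$ cannot be pulled through onto $\widetilde W$ and the vanishing does not follow from $I^A\widetilde W_{ABCD}=0$.

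What actually makes $I^A\bar I^BD_{AB}\widetilde W_{CDEF}=0$ work is real geometric input. In the scale $\si$ the contraction $I^A\bar I^BD_{AB}$ becomes a first-order operator built from $\si(\De\bar\si)$ and $\si(\na^a\bar\si)\na_a$, and its vanishing on $\widetilde W_{CDEF}=\Z_{CD}^{\;c\;d}\Z_{EF}^{\;e\;f}C_{cdef}$ requires the integrability condition $C_{abcd}\na^d\bar\si=0$ for the second Einstein scale, the Hessian equation $\na_a\na_b\bar\si=\tfrac1n\bg_{ab}\De\bar\si$, and the contracted second Bianchi identity (which together yield $(\na^a\bar\si)\na_aC_{cdef}=-\tfrac2n(\De\bar\si)C_{cdef}$). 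This is exactly the content the paper imports from \cite{GoSiEHarm} (Proposition 2.1(ii) and Lemma 2.2(ii)) and re-verifies for $\widetilde W$ in dimension four. Your first step, $I^A\widetilde W_{ABCD}=0$ (from $I^AZ_A{}^a=\na^a\si=0$ and the vanishing of the Cotton terms in \nn{Wform4} in the scale $\si$), is correct but does not suffice on its own.
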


\begin{proof}
Since $I^AW_{ABCD}=0$, the case $n \not=4$ follows from 
\cite[(13) and Lemma 2.2(ii)]{GoSiEHarm}. Assume $n=4$. Then  
$I^A\widetilde{W}_{ABCD}=0$. Choosing an Einstein scale $\si$, this is easily 
verified using relations $\Om_{abCD}I^D=0$,
$\na_{[a}P_{b]c}=0$ and $\si^{-1}(\na_{[b}P_{a]d}) \na^a\si=0$, cf.\
the explicit formula of $\widetilde{W}_{ABCD}$ above. 
Further one easily verifies that \cite[Proposition 2.1 (ii)]{GoSiEHarm}
holds if we replace $W_{ABCD}$ by $\widetilde{W}_{ABCD}$. Therefore
also \cite[Lemma 2.2(ii)]{GoSiEHarm} holds if we replace $W_{ABCD}$ by 
$\widetilde{W}_{ABCD}$ and the case $n=4$ follows.
\end{proof}

\section{Einstein manifolds: conformal Laplacian operators on tractors}
\label{ops}

We assume henceforth that the structure $(M,[g])$ (is of dimension
$n\geq 3$ and) is conformally Einstein, and write $\si \in \cE[1]$ for
some Einstein scale from the conformal class. Then $I^A := \frac{1}{n}
D^A \si$ is parallel and $X^AI_A=\si$ is non-vanishing.

The operator $\modBox := \De + wJ + \widetilde{W} \sharp\sharp$ acting on 
tractor bundles of the weight
$w$ is conformally invariant only if $n+2w-2=0$. On the other hand the scale $\si$
(or equivalently $I^A$), yields the operator
\begin{equation} \label{modBox}
  \modBox_\si := I^A \modD_A = \si (-\modDe - 2\frac{w}{n}(n+w-1)J): 
  \cE_{B \cdots E}[w] \longrightarrow \cE_{B \cdots E}[w-1]
\end{equation}
which is well defined for any $w$, cf.\ \cite{GoEinst}. 
Thus we can consider
the composition $(\modBox_\si)^p$, $p \in \N$ and we set
$(\modBox_\si)^0 := \id$. These operators generally depend on the choice
of the scale $\si$ but one has the following modification of
\cite[Theorem 3.1]{GoEinst}.

\begin{theorem}\cite{GoSiKil} \label{indep}
Let $\si,\bar{\si}$ be two Einstein scales in the conformal
class and consider the operators 
$$ \frac{1}{\si^p} (\modBox_\si)^p, 
   \frac{1}{\bar{\si}^p} (\modBox_{\bar{\si}})^p:
   \cE_{B \cdots E}[w] \longrightarrow \cE_{B \cdots E}[w-2p],  $$
for $p\in \mathbb{Z}_{\geq 0}$.
If $w= p- n/2 $ then 
$\frac{1}{\si^p} (\modBox_\si)^p = 
\frac{1}{\bar{\si}^p} (\modBox_{\bar{\si}})^p$.
\end{theorem}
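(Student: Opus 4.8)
The plan is to prove the formally stronger statement that $\tfrac{1}{\si^p}(\modBox_\si)^p$, viewed on $\cE_{B\cdots E}[p-n/2]$, is constant along the straight--line path between the two Einstein scales; the case $p=0$ is trivial, so take $p\ge 1$.

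First I would record three elementary facts. (i) Since $\modD_A$ is assembled in \eqref{modeinst} from $D_A$, from $X_A$, and from the curvature $\widetilde W$ --- none of which depends on the Einstein scale --- and since a parallel standard tractor is a canonical object, the operator $\tfrac{1}{\si^p}(I^A\modD_A)^p$ is automatically conformally invariant; thus the only thing at issue is its independence of the Einstein scale. (ii) If $V^B\in\cE^B$ is parallel then $\modD_A V^B=0$: in \eqref{tractorD} the $w=0$ coefficient and the $Z_{Aa}\na^a$--term vanish, $\Box V^B=\De V^B=0$, and $\widetilde W\sharp\sharp V^B=0$ because a parallel tractor contracted into any slot of $\widetilde W$ vanishes (cf.\ the proof of Lemma \ref{IIDD}); more generally $\modD_A$ passes through a contraction against a parallel tractor. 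Hence, for two parallel tractors $I^A,\bar I^A$, the operators $I^A\modD_A$ and $\bar I^A\modD_A$ \emph{commute} on density valued tractor fields, the commutator collapsing to $I^A\bar I^B[\modD_A,\modD_B]=0$ (Lemma \ref{IIDD}). (iii) On conformal weight $1-n/2$ the scalar $n+2w-2$ in \eqref{tractorD} vanishes, so there $\modD_A=-X_A\modBox$, with $\modBox=\De+wJ+\widetilde W\sharp\sharp$ as in Section \ref{ops}.

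Next I would deform. Put $I_t^A:=(1-t)I^A+t\bar I^A$, again parallel; since $\si,\bar\si$ are scales (which we may take positive) the density $\si_t:=X^AI_{tA}=(1-t)\si+t\bar\si$ is strictly positive, hence an Einstein scale, for all $t\in[0,1]$. Writing $\modBox_{\si_t}:=I_t^A\modD_A$ and $R_t:=\tfrac{1}{\si_t^{p}}(\modBox_{\si_t})^{p}\colon\cE_{B\cdots E}[p-n/2]\to\cE_{B\cdots E}[-p-n/2]$, the family $t\mapsto R_t$ is smooth because $\si_t>0$. Since $\tfrac{d}{dt}\modBox_{\si_t}=(\bar I-I)^A\modD_A$ commutes with $\modBox_{\si_t}$ by (ii), the Leibniz rule collapses, and collecting the two resulting terms gives
\[
\tfrac{d}{dt}R_t=\tfrac{p}{\si_t^{p}}\bigl(K_t^A\modD_A\bigr)\circ(\modBox_{\si_t})^{p-1},\qquad K_t^A:=(\bar I-I)^A-\tfrac{\bar\si-\si}{\si_t}\,I_t^A .
\]
The decisive observation is that $X_AK_t^A=(\bar\si-\si)-\tfrac{\bar\si-\si}{\si_t}\si_t=0$.

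Now the hypothesis $w=p-n/2$ enters: because $\modBox_{\si_t}$ lowers conformal weight by one, $(\modBox_{\si_t})^{p-1}$ carries $\cE_{B\cdots E}[p-n/2]$ into $\cE_{B\cdots E}[1-n/2]$, and on that weight $\modD_A=-X_A\modBox$ by (iii); therefore $K_t^A\modD_A=-(X_AK_t^A)\modBox=0$, so $\tfrac{d}{dt}R_t\equiv 0$ on $\cE_{B\cdots E}[p-n/2]$. Integrating from $t=0$ to $t=1$ yields $\tfrac{1}{\si^p}(\modBox_\si)^p=\tfrac{1}{\bar\si^p}(\modBox_{\bar\si})^p$ there, which is the assertion. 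The step I expect to be the main obstacle is finding this argument: one must hit on the linear interpolation $\si_t=(1-t)\si+t\bar\si$ (legitimate precisely because scales are positive, so the path never degenerates) and then recognise that the balanced weight $p-n/2$ is exactly what forces the leftover $\modD_A$, after $p-1$ applications of $\modBox_{\si_t}$, to collapse to a multiple of $X_A$ and so annihilate the ``$X$--trace-free'' tractor $K_t^A$ produced by differentiation. An alternative, more computational, route would follow the induction of \cite[Theorem 3.1]{GoEinst}: in an Einstein scale $\tfrac{1}{\si^p}(\modBox_\si)^p$ factors as a product of mutually commuting second order operators of the form $-\modDe-cJ$ ($c\in\bR$), and one checks that this product is unchanged under a conformal rescaling between Einstein scales; the deformation argument above is shorter and disposes of all $p$ and all $n\ge 3$ (including $n=4$) at once.
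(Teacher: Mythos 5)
Your proof is correct and is essentially the argument that the paper delegates to \cite[Theorem 3.1]{GoSiEHarm} (itself modelled on \cite[Theorem 3.1]{GoEinst}): the paper's entire proof consists of supplying the commutator identity $I^A\bar I^B[\modD_A,\modD_B]=0$ (Lemma \ref{IIDD}) and citing that argument, whose mechanism is precisely what you reconstruct --- commutativity of the operators $\modBox_{\si_t}$ along the segment of Einstein scales $\si_t=(1-t)\si+t\bar\si$ (legitimate since scales may be taken positive, so $I_t$ is parallel and $\si_t$ nonvanishing), combined with the collapse $\modD_A=-X_A\modBox$ on weight $1-n/2$, which annihilates the $X$-orthogonal tractor $K_t$ produced by differentiating $\si_t^{-p}(\modBox_{\si_t})^p$. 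You have in effect written out in full the details the paper leaves to the references, and the argument is sound.
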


\begin{proof}
The proof is completely analogous to the proof of \cite[Theorem 3.1]{GoSiEHarm}
once we know $I^A \bar{I}^B[\modD_A,\modD_B]=0$. Hence the theorem follows using
Lemma \ref{IIDD}. 
\end{proof}

One can generalise the tractor--D operator to weighted forms as follows.
First some notation.
Exterior and interior multiplication by a tractor 1-form $\omega$ are given by
\begin{equation}\label{wedgie}
\begin{array}{rl}
(\varepsilon(\omega)\varphi)_{A_0\cdots A_k}&=(k+1)\omega_{[A_0}\varphi_{A_1\cdots A_k]}\,, \\
(\iota(\omega)\varphi)_{A_2\cdots A_k}&=\omega^{A_1}\varphi_{A_1\cdots A_k}\,.
\end{array}
\end{equation}
We extend the notation for interior and exterior multiplication in
an obvious way to operators which increase the rank by one.
For example, for $\varphi$ a weighted tractor form, $\io(\modD)\varphi$ means $\modD^{A_1}\varphi_{A_1\cdots A_k}$.
 
The operators 
$M_{\form{A}}^{\form{a}}[w]: \cE_{\form{a}} \to \cE_{\form{A}}[w-k]$ 
(see the operator $\overline{M}$ from \cite{GoSiKil})
and
$M^*{}^{\form{A}}_{\form{a}}: \cE_{\form{A}}[w'] \to \cE_{\form{a}}[w'+k]$
defined as
\begin{eqnarray} \label{M}
\begin{split}
  &M_{\form{A}}^{\form{a}} f_{\form{a}} =
     \frac{n+w-2k}{k} \Z_{\form{A}}^{\,\form{a}} f_{\form{a}}
     + \X_{\form{A}}^{\,\dform{a}} (\de f)_{\dform{a}}, 
   \quad f_{\form{a}} \in \cE_\form{a}[w] \\
  &M^*{}^{\form{A}}_{\form{a}} F_{\form{A}} =
     -(w'+k) \Z^{\form{A}}_{\,\form{a}} F_{\form{A}}
     + (d\X^{\form{A}} F_{\form{A}})_{\form{a}},
     \quad F_{\form{A}} \in \cE_\form{A}[w'].
\end{split}
\end{eqnarray}
where $\form{A} = \form{A}^k$ and $\form{a} = \form{a}^k$ are formal adjoints
for suitable choice of $w'$. (Hence $M^*$ is conformally invariant.)
These operators are closely related to $\io(\modD)\ep(X)$ and 
$\io(X)\ep(\modD)$. 
One easily computes that 
\begin{eqnarray*} 
\begin{split}
 &k(n+2(w-k)+2) M f =
     \io(\modD)\ep(X) q_k f, 
     \quad f \in \cE^k[w] \\
 &-(n+2w'-2) M^* F =
     q^k \io(X)\ep(\modD) F, 
     \quad F \in \cT^k[w'].
\end{split}
\end{eqnarray*}
It follows that on differential forms of generic weight 
\begin{equation}\label{dm}
\io(\modD)M=0.
\end{equation}
In fact by computing the remaining case it follows that this holds for
all weights.  Another consequence for $f$ and $F$ as above we have
\begin{eqnarray} \label{MM}
\begin{split}
  &\io(\modD)\ep(X) \io(X)\ep(\modD) F\!=\!
  -k(n\!+\!2w'\!+\!2)(n\!+\!2w'\!-\!2) MM^* F, \\
  &q^k\io(\!X\!)\ep(\modD)\io(\modD)\ep(\!X\!)q_k f \!=\! 
  -k(n\!+\!2(w\!-\!k)\!+\!2)(n\!+\!2(w\!-\!k)\!-\!2)M^*\!Mf, \\
  &\text{and}\ M^*Mf = -\frac{1}{k} w(n+w-2k) \id.
\end{split}
\end{eqnarray}
Note that contrary to the last relation, $MM^*$ is generally 
not a multiple of the identity.

Using these we obtain the (conformally invariant) operator 
\begin{eqnarray} \label{tildeD}
\begin{split}
  &\tmodD_B: f_{\form{a}}[w] \longrightarrow f_{\form{a}}[w-1]\\
  &\tmodD_B f_\form{a} =
   M^*{}^{\form{A}}_{\,\form{a}} \modD_B^{} M_\form{A}^\form{a} f_\form{a}.
\end{split}
\end{eqnarray}

As an analogy of \nn{modBox} we have (scale dependent) operators
\begin{equation} \label{tmodBox}
  \tmodBox^{(p)}_\si := 
  M^*{}^{\form{A}}_{\,\form{a}} (\modBox_\si)^p M_\form{A}^\form{a}: 
  \cE_\form{a}[w] \longrightarrow \cE_\form{a}[w-p]
\end{equation}
for $p \geq 1$ and we put $\tmodBox^{(0)}_\si := \id$. 
The case $p=1$ shall be denoted simply as
$\tmodBox_\si := \tmodBox^{(1)}_\si$. 
Note the operators $(\tmodBox_\si)^p$ and $\tmodBox^{(p)}_\si$ are 
generally different for $p\geq 2$. Using the formulae \nn{tractorD}, \nn{na} and \nn{dde-W} in a computation we obtain
\begin{eqnarray} \label{tmodbox1}
\begin{split}
  \tmodBox_\si^{(1)} = \tmodBox_\si = - \frac{1}{k} \si \bigl[
  &w(n+w-2k-1) d\de + (w-1)(n+w-2k) \de d \\
  &-2\frac{w(w-1)}{n} (n+w-2k) (n+w-2k-1) J \bigr].
\end{split}
\end{eqnarray}
and 
\begin{eqnarray*}
\begin{split}
  \tmodBox_\si^{(2)} &= 
   -\frac{1}{k} \si^2 \bigl[ w(n+w-2k-2) (d\de)^2 + (w-2)(n+w-2k) (\de d)^2 \\ 
  &-\frac{2}{n}w (n+w-2k-2) [ (w-1)(n+w-2k) + (w-2)(n+w-2k-1)] Jd\de \\
  &-\frac{2}{n}(w-2) (n+w-2k) [ (w-1)(n+w-2k-2) + w(n+w-2k-1)] J\de d \\
  &+\frac{4}{n^2} w(w-1)(w-2)(n+w-2k)(n+w-2k-1)(n+w-2k-2) J^2 \bigr].
\end{split}
\end{eqnarray*}
Actually one computes
$(\tmodBox_\si)^2 = -\frac{1}{k}(w-1)(n+w-2k-1) \tmodBox_\si^{(2)}$ from last 
two displays. This shows
that for $w \not\in \{1,-n+2k+1\}$, $\tmodBox_\si^{(2)}$ can be always 
decomposed into simpler 
factors. Later we shall need the case $w=1$ and remarkably, this can be also 
decomposed. On $\cE^k[1]$ we have
\begin{multline} \label{tmodbox2}
  \tmodBox_\si^{(2)} = -\frac{2}{k} \si^2 \Bigl[ 
  (\frac{n}{2}-k-\frac{1}{2})d\de + (\frac{n}{2}-k+\frac{1}{2})\de d \Bigr] 
  \Bigl[ d\de  - \de d 
  + \frac{4}{n} (\frac{n}{2}-k)J \Bigr]
\end{multline}
where the right hand side is not a scalar multiple of $(\tmodBox_\si)^2$.

\section{Conformal operators on weighted forms} \label{four}

As above we assume $(M,[g])$ is conformally Einstein and of any
dimension $n\geq 3$.  Henceforth we we also assume that $k$ is an integer
in the range $1 \leq k \leq \frac{n}{2}$.  As a further point of notation: we
shall write $\ph \sim \ps$ for differential operators $\ph$ and $\ps$ if
they are equal up to a nonzero scalar multiple. Further we assume $\si
\in \cE[1]$ is an Einstein scale and denote by $I^A$ the corresponding parallel tractor.  Our
main aim is to study the following class of operators:
  \begin{definition} \label{Ldef}
For each positive integer $\ell$, and $k$ as above, we shall define
the operator $L_k^\ell$, $\ell \geq 0$ by the formula
\begin{equation} \label{L}
  L_k^\ell := \si^{-\ell} q^k (\modBox_\si)^\ell M:
  \cE^k[w] \longrightarrow  \cE^k [w-2\ell],
  \quad w := k+\ell-n/2.
\end{equation}
\end{definition}

This operator has several important properties coming from its
relation to the operators denoted by the same symbol $L_k^\ell$ in
\cite{BrGodeRham}. These are defined there using the Fefferman-Graham
ambient metric of \cite{FGbook} and its link to the tractor connection
(as treated in \cite{CapGoamb,GoPetLap}).  First note the ambient
metric (denoted by $\h$ in \cite{GoSiEHarm}) exists and is unique (up
to an ambient diffeomorphism) to all orders in our case.  This is
always true in odd dimensions and, using \cite[Proposition
  7.5]{FGbook}, on conformally Einstein manifolds there is also a
canonical ambient metric to all orders in even dimensions. This in
particular means the operators $L_k^\ell$ in \cite{BrGodeRham} exist
and are unique for all $\ell \geq 1$, i.e.\ without any restrictions in
even dimensions.

Second, we recall that $L_k^\ell$ is defined in \cite{BrGodeRham} using
the ambient form Laplacian (denoted by $\afl$ in \cite{GoSiEHarm} and
\cite{BrGodeRham}).  The crucial fact for us is that the powers
$\afl^\ell$ can be rewritten for a chosen Einstein scale as in
\cite[Proposition 7.4]{GoSiEHarm}. Since the version of the ambient
$D$-operator (denoted by $\afD$ in \cite{GoSiEHarm})) descends to the
tractor operator $\modD$ in \nn{modgen} for $n \not=4$, we conclude
that the operator \nn{L} coincides with $L_k^\ell$ from
\cite{BrGodeRham} up to a scalar multiple for $n \not=4$. Here we used
$\io(X) (\modBox_\si)^\ell M =0$, for $w$ as in \nn{L}, which follows
from the discussion above (in particular from \nn{dm} and \cite[Lemma
  4.2]{BrGodeRham} with \cite[Proposition 7.4]{GoSiEHarm}).  Next
observe that if we calculate the right hand side of \nn{Wform4} in an
Einstein scale $\si$ then the terms involving the Cotton tensor terms
are zero, and we obtain the simplified expression
$\widetilde{W}_{ABCD} = \Z_{AB}^{\;a\;b} \Z_{CD}^{\;c\;d}
C_{abcd}$. Using this it follows easily that on conformally Einstein
manifolds $\widetilde{W}_{ABCD}$ coincides with the tractor field
determined by curvature of the ambient metric. Thus the ambient
$D$-operator $\afD$ descends to tractor operator $\modD$ of
\nn{modeinst} and it follows that the operator \nn{L} coincides with
$L_k^\ell$ from \cite{BrGodeRham} also in the case $n =4$.

Finally, observe that it follows from Theorem \ref{indep} that
$L_k^\ell$ is independent of the choice of Einstein scale $\si$, and
so this is another route to establishing uniqueness of these operators
on conformally Einstein manifolds.

\vspace{1ex}

It is also interesting to understand the meaning of  the ``bottom slot'' of 
$ \si^{-\ell} (\modBox_\si)^\ell Mf$ for $f \in \cE^k[w]$, i.e.\ the operator 
\begin{equation} \label{G}
  G_k^{\ell,\si} := \si^{-\ell} q^{k-1} \io(Y) (\modBox_\si)^\ell M:
  \cE^k[w] \longrightarrow  \cE^k [w\!-\!2\ell\!-\!2],
  \quad w := k\!+\!\ell\!-\!n/2,
\end{equation}
which depends on the choice of the Einstein scale $\si$.

The operators $L_k^1$ and $G_k^{1,\si}$ are particularly simple:

\begin{theorem} \label{L1}
Let $w=k+1-n/2$. The operators
$$ 
L_k^1: \cE^k[w] \to \cE^k[w-2] \quad \text{and} \quad
G_k^{1,\si}: \cE^k[w] \to \cE^{k-1}[w-4] 
$$
have explicit form
\begin{align*}
&L_k^1 = \bigl( \frac{n}{2}-k-1 \bigr) d\de + 
\bigl(\frac{n}{2}-k+1 \bigr) \de d
+ \frac{2}{n} \bigl(\frac{n}{2}-k-1 \bigr) \bigl(\frac{n}{2}-k+1 \bigr) 
\bigl( \frac{n}{2}-k \bigr) J \\
&G_k^{1,\si} = \de \bigl[ d\de 
+ \frac{2}{n} \bigl(\frac{n}{2}-k+1 \bigr) \bigl(\frac{n}{2}-k \bigr) J\bigr].
\end{align*}
\end{theorem}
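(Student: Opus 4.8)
The plan is to compute $L_k^1$ and $G_k^{1,\si}$ directly from Definition \ref{Ldef}, unwinding the composition $\si^{-1} q^k (\modBox_\si) M$ (respectively $\si^{-1} q^{k-1}\io(Y)(\modBox_\si)M$) slot by slot. The ingredients are all in hand: the operator $M$ is given explicitly in \nn{M}, the operator $\modBox_\si = I^A\modD_A$ acts on a tractor field via \nn{modBox} as $\si(-\modDe - 2\frac{w}{n}(n+w-1)J)$, the action of $\modDe$ on each of the building-block tractors $\Y,\Z,\W,\X$ in an Einstein scale is recorded in \nn{dde-W}, and $q^k$ (resp.\ $q^{k-1}\io(Y)$) simply reads off the $\Z$-slot (resp.\ $\Y$-slot) of the result.

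Concretely, first I would apply $M$ to $f\in\cE^k[w]$ with $w=k+1-n/2$, obtaining $Mf = \frac{n+w-2k}{k}\Z_{\form A}^{\,\form a} f_{\form a} + \X_{\form A}^{\,\dform a}(\de f)_{\dform a}$; note $n+w-2k = \frac n2 - k +1$. Next apply $\modBox_\si$: by \nn{modBox} this is $-\si$ times $\modDe$ plus a $J$-term, and \nn{dde-W} tells us how $\modDe$ moves the $\Z$- and $\X$-slots around — the $\Z$-slot of $Mf$ produces $\Y$-, $\Z$- and $\X$-contributions, and the $\X$-slot of $Mf$ produces $\Y$-, $\W$-, $\Z$- and $\X$-contributions. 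Then for $L_k^1$ I extract the resulting $\Z$-slot (this is what $q^k$ picks out, after multiplying by $\si^{-1}$): the $\Z$-slot receives a $d\de$ term from the old $\Z$-slot, a $\de d$ term from the old $\Z$-slot, a $d$-of-($\de f$) term from the old $\X$-slot, together with the various $J$ contributions, and collecting coefficients — using $w = \frac n2 - k + 1 - \frac n2 \cdot 0$, i.e.\ substituting $w-2k = -\frac n2 - k + 1$ and so on — should yield exactly the stated formula for $L_k^1$ up to the overall nonzero constant $\frac1k$ or similar. For $G_k^{1,\si}$ I instead extract the $\Y$-slot; here the $d\de$-type terms cannot appear (the $\Y$-slot of $\modDe$ applied to a $\Z$ comes with a bare $\de$, and applied to an $\X$ with a bare $\de$ composed with the interior $\de f$, giving the overall outer $\de$ in the formula), and collecting the two surviving contributions gives $\de[d\de + \frac2n(\frac n2-k+1)(\frac n2-k)J]$ up to scale.

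The bookkeeping is entirely mechanical once the weight $w=k+1-n/2$ is substituted into the coefficients appearing in \nn{dde-W} and \nn{modBox}; the only subtlety is keeping straight which of $\de d$ and $d\de$ a given term contributes to (since \nn{dde-W} is stated with $\de d + d\de$ grouped together on the diagonal, but the off-diagonal $\Z\!\to\!\Y$ and $\W\!\to\!\Y$ pieces carry bare $\de$'s and $d$'s that, after the outer $\de$ or $d$ coming from the structure of $M$ and $M^*$, separate the two). As a consistency check I would verify that the leading symbol agrees with the Laplacian-like form $(n-2k+2)\de d + (n-2k-2)d\de$ advertised in the introduction for $\ell=1$, and that for $w=0$ (i.e.\ $k=n/2-1$... ) the $J$-term in $L_k^1$ does not drop but the operator still matches the known $\ell=1$ operator of \cite{BrGodeRham}.

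The main obstacle is simply avoiding sign and coefficient errors: there are four source slots feeding into the two target slots, each with its own weight-dependent prefactor, and the factor $\si$ in $\modBox_\si$ interacts with the $\si^{-1}$ normalisation and with the fact that $\si$ is parallel (so it commutes past $\na$) — so care is needed that the density weights balance at each stage. I do not expect any conceptual difficulty; the computation is a direct specialisation of the general machinery, and indeed this is the $\ell=1$ base case that the inductive arguments behind Theorem \ref{MM*} and Theorem \ref{main} will rest on.
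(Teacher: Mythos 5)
Your treatment of $L_k^1$ is essentially the paper's: the paper simply asserts ``the theorem follows by a direct computation'' of exactly the kind you describe (and, for $k\neq n/2$, offers a shortcut by noting that $\io(X)\modBox_\si M=0$ forces $M^*\modBox_\si M=-(w'+k)\,q^k\modBox_\si M$, so that $L_k^1$ can be read off from the already-computed formula \nn{tmodbox1} for $\tmodBox_\si$). Your slot bookkeeping for the $\Z$-component is correct.

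For $G_k^{1,\si}$, however, there is a concrete error: you propose to extract the $\Y$-slot, but $q^{k-1}\io(Y)$ extracts the $\X$-slot. Since $Y^AY_A=0=Y^AZ_{Aa}$ and $Y^AX_A=1$, contracting $Y^{A^1}$ into a tractor form annihilates the $\Y$- and $\Z$-components and survives only on the $\X$- and $\W$-components, and the subsequent $q^{k-1}$ kills the $\W$-part; this is why the paper calls $G_k^{\ell,\si}$ the ``bottom slot'' and why in \nn{sectF} it appears as the coefficient of $\X_\form{A}^{\,\dform{a}}$. The distinction matters: by \nn{dde-W} the $\Y$-slot of $\modBox_\si Mf$ receives only the first-order contributions $-2k\,\de\mu$ (from the $\Z$-source) and $(n-2k+2)\rho$ (from the $\X$-source, with $\rho=\de f$), and at the weight $w=k+1-n/2$ these cancel identically --- this cancellation is precisely the invariance statement $\io(X)\modBox_\si M=0$ that the paper uses elsewhere. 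A first-order slot can never produce the third-order operator $\de\,d\de$, so the route you describe cannot yield the stated formula for $G_k^{1,\si}$. The correct computation reads off the $\X$-slot, where the diagonal term $(\de d+d\de)\rho=\de d\de f$ (using $\de\de=0$) supplies the leading part and the $-\tfrac{2k}{n}J\de\mu$ term from the $\Z$-source, together with the diagonal $J$-terms, assembles into $\de\bigl[d\de+\tfrac{2}{n}(\tfrac n2-k+1)(\tfrac n2-k)J\bigr]$. With that correction the rest of your plan goes through.
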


\noindent
Note that up to a nonzero scalar multiple, $L_{n/2}^1$ simplifies to
$d\de-\de d$.  On the other hand, there is the relation $G_k^{1,\si} =
\frac{1}{n/2-k-1} \de L_k^1$ for $\frac{n}{2}-k-1 \not= 0$.  The
latter conditions excludes true forms, i.e.\ the weight $w=0$.

\begin{proof}
The theorem follows by a direct computation. Concerning $L_k^1$ for 
$k \not= \frac{n}{2}$ one can proceed also by the following (simpler) way. 
We use once again that \cite[Proposition 7.4]{GoSiEHarm}
 with \cite[Lemma 4.2]{BrGodeRham}
implies  that  $\io(X)\modBox_\si M=0$ on $\cE^k[w]$. 
Thus $M^*\modBox_\si M = -(w'+k)q^k\modBox_\si M$ on $\cE^k[w]$,
cf.\ \nn{M}, where $w'=w-1$. Therefore comparing 
$L_k^1 =  \si^{-\ell} q^k (\modBox_\si)^\ell M$ with 
$\frac{1}{\si} \widetilde{\modBox}_\si = \frac{1}{\si} M^* \modBox_\si M$, we 
see these coincide up to a constant multiple for $w'-1 \not =0$, i.e.\
for $k \not= \frac{n}{2}$. Thus the form of $L_k^1$ follows from \nn{tmodbox1}
in the latter case. 
\end{proof}

Before studying operators $L_k^\ell$ in detail, we describe their relation  
with the operators $G_k^{\ell,\si}$.

\begin{theorem} \label{LG}
Let $w=k+\ell-n/2$ where $1 \leq k \leq \frac{n}{2}$. Computing
in an Einstein scale $\si$, operators
$G_k^{\ell,\si}: \cE^k[k+l-n/2] \longrightarrow  \cE^{k-1} [k-\ell-n/2-2]$ 
satisfy 
\begin{align*}
&w G_k^{\ell,\si} = -\de L_k^\ell, \\
&G_k^{\ell,\si} = \frac{k-1}{k(n+w-2k+1)}\si^{-1} L_{k-1}^{\ell} \si\de
\quad \text{for} \quad k \geq 2.
\end{align*}
\end{theorem}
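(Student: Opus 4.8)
The plan is to prove both identities by exploiting the factored definition \nn{L} of $L_k^\ell$ together with the commuting properties of the building blocks $\modD$, $M$ and $M^*$. For the first identity, observe that $-\de L_k^\ell = -\de\,\si^{-\ell} q^k (\modBox_\si)^\ell M$, and that on weighted forms the operator $\de$ is (in the chosen scale, and after the $q^k$-projection) essentially the bottom-slot readout combined with $\io(Y)$; more precisely, one wants to identify $-\de q^k$ with a constant multiple of $q^{k-1}\io(Y)$ acting on the $\modD$-string, the constant being $w$ up to the usual normalisations. The natural route is to use that $\io(X)(\modBox_\si)^\ell M = 0$ for $w$ as in \nn{L} (already established in the discussion preceding Theorem \ref{L1}), so the image of $(\modBox_\si)^\ell M$ has vanishing $\X$-component; then in the splitting \nn{comp_series_form} the surviving slots are the $\Y$, $\Z$ and $\W$ components, and applying $\de$ to the $\Z$-slot (via \nn{na}) produces exactly the $\Y$-slot term that $\io(Y)$ reads off. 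Tracking the scalar factor through \nn{M} and the normalisation constants in \nn{MM} should yield precisely $w$.

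For the second identity the key is the relation $G_k^{\ell,\si} = \si^{-\ell} q^{k-1}\io(Y)(\modBox_\si)^\ell M$ and the observation that $\io(Y)$ applied to a tractor $(k+1)$-form, modulo the $\X$-slot (which is killed here), lands in tractor $k$-forms in a way compatible with $M_{k-1}$ acting on $\cE^{k-1}$. Concretely, I would show $q^{k-1}\io(Y)\ep(X)q_{k-1} \sim \id$ up to a scalar, and that $\io(Y)$ intertwines the $(k+1)$-form data coming from $M_k f$ with the $k$-form data $M_{k-1}(\si\de f)$ — the appearance of $\si\de$ rather than $\de$ being forced by weight bookkeeping, since $M_{k-1}$ expects weight $w-1$ input while $\de f$ has weight $w$ and is corrected by the $\si$. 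Having matched the ``input'' side, one uses that $(\modBox_\si)^\ell$ commutes appropriately (it is a scalar-type operator on each tractor slot up to lower-weight corrections, and crucially $\io(Y)$ and $\modBox_\si$ satisfy a Leibniz-type relation via \nn{dde-W}) to push $\io(Y)$ through the whole string, thereby converting $q^{k-1}\io(Y)(\modBox_\si)^\ell M_k f$ into $q^{k-1}(\modBox_\si)^\ell M_{k-1}(\si\de f)$ up to scalar and lower-order slot terms that vanish after projection; recognising the right-hand side as $\si^{-1} L_{k-1}^\ell \si\de f$ (with the weight shift $w\mapsto w-1 = (k-1)+\ell-n/2$ matching the hypothesis of Definition \ref{Ldef} for $k-1$) completes the argument, and the constant $\frac{k-1}{k(n+w-2k+1)}$ is extracted from \nn{M} applied at ranks $k$ and $k-1$.

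Alternatively, and more cleanly for the second part, once the first identity $wG_k^{\ell,\si} = -\de L_k^\ell$ is in hand one can sidestep the slot-chasing: combine it with the conformal-transformation/adjointness relation $\de L_k^\ell \sim \si^{-1} L_{k-1}^\ell \si \de$ up to scalar, which is itself a consequence of the fact (used already in the proof of Theorem \ref{L1}) that $L_k^\ell$ composed with $\de$ factors through the rank-$(k-1)$ version of the same construction — this is essentially the observation in \cite{GoSiEHarm} that $\de$ intertwines consecutive members of the family. Then $G_k^{\ell,\si} = -\frac{1}{w}\de L_k^\ell = -\frac{1}{w}\cdot(\text{const})\,\si^{-1}L_{k-1}^\ell\si\de$, and one only needs to pin down that the constant equals $-\frac{w(k-1)}{k(n+w-2k+1)}$, which is a finite computation comparing leading symbols (e.g.\ the $\de d$ coefficients from \nn{tmodbox1} at the two ranks).

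The main obstacle I anticipate is the scalar bookkeeping: the operators $M$, $M^*$, $\io(\modD)$ and the projectors carry dimension- and weight-dependent normalisation constants (visible already in \nn{M}, \nn{MM}, \nn{tmodbox1}), and the subtlety of whether one is in the ``regular'' or ``singular'' weight range (i.e.\ whether factors like $n+2w'-2$ or $w-1$ vanish, as flagged in the proof of Theorem \ref{L1}) means the naive manipulations can encounter division by zero exactly in the cases of interest. I would therefore expect to handle the generic-weight case by the slot-chasing above and then argue that both sides of each identity are polynomial (in the relevant parameters, or simply continuous families of differential operators) and hence the identities persist at the special weights $w\in\{k-n/2+1,\dots,k\}$ by continuity — or else verify those boundary cases directly using \nn{tmodbox1} and \nn{tmodbox2}, which is why those explicit formulae were recorded.
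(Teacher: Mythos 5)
Your proposal circles the right objects (the tractor field $F:=\si^{-\ell}(\modBox_\si)^\ell Mf$ and its slot structure) but misses the two mechanisms that actually produce the identities, and the slot bookkeeping it does contain is inverted. Since $X^AY_A=1$ while $X^AZ_A^a=X^AX_A=0$, the condition $\io(X)(\modBox_\si)^\ell M=0$ kills the $\Y$- and $\W$-slots of $F$, so the \emph{surviving} slots are $\Z$ (carrying $L_k^\ell f$) and $\X$ (carrying $kG_k^{\ell,\si}f$, since $\io(Y)$ reads off the $\X$-slot) --- not ``$\Y$, $\Z$ and $\W$'' as you write. More seriously, merely knowing the slot structure of $F$ gives no relation between its $\Z$- and $\X$-slots; your suggestion that ``applying $\de$ to the $\Z$-slot produces the $\Y$-slot term that $\io(Y)$ reads off'' does not supply one. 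The missing ingredient for the first identity is the additional invariant equation $\io(\modD)F=0$, which follows from $\io(\modD)M=0$ (equation \nn{dm}) together with the commutation of $\io(\modD)$ past $(\modBox_\si)^\ell$; expanding $\modD^BF_{B\dform{A}}$ via \nn{tractorD} and \nn{na}, its $\Z$-slot is exactly $2\ell\bigl[wG_k^{\ell,\si}f+\de L_k^\ell f\bigr]$, and setting this to zero is the whole proof.

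For the second identity your mechanism is also not viable as stated: $\io(Y)$ does \emph{not} commute (even up to a usable Leibniz rule from \nn{dde-W}) with $\modBox_\si$, so ``pushing $\io(Y)$ through the string'' fails. What does commute is $\io(I)$ for the parallel tractor $I$, since $\modBox_\si=I^A\modD_A$; the trick is that in the Einstein scale $I_A=\si Y_A-\tfrac{\si J}{n}X_A$, so on the image of $(\modBox_\si)^\ell M$ (where $\io(X)$ vanishes) one has $\io(Y)=\si^{-1}\io(I)$, and then a one-line computation from \nn{M} gives $k\tfrac{n+w-2k+1}{k-1}\,\io(I)M=M\si\de$, yielding the claimed constant. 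Your alternative route --- quoting an intertwining $\de L_k^\ell\sim\si^{-1}L_{k-1}^\ell\si\de$ from \cite{GoSiEHarm} --- is circular: that relation for $w\neq 0$ is precisely what is being established here. Finally, the worry about singular weights is moot: with the argument above nothing is divided by a quantity that can vanish (note $n+w-2k+1=(\tfrac n2-k)+(\ell+1)\geq 1$), so no continuity argument is needed.
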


This result means that the operators $G_k^{\ell,\si}$ are not especially 
interesting for $w \not=0$. This is in strong contrast to the case
$w=0$, see \cite{GoSiEHarm} for details.  Note also the denominator in
the second display is always nonzero as $n+w-2k+1 =
n+(k+\ell-n/2)-2k+1 = (\frac{n}{2}-k) + (\ell+1) \geq 1$.

\begin{proof}
Since $\io(X) (\modBox_\si)^\ell M=0$ on $\cE^k[w]$ (as explained above), 
and using 
definitions of $L_k^\ell$ and $G_k^{\ell,\si}$ in \nn{L} and \nn{G},
these operators appear in the tractor field
\begin{equation} \label{sectF}
F_\form{A} := \si^{-\ell} \bigl( (\modBox_\si)^\ell M f \bigr)_\form{A} = 
\Z_\form{A}^{\,\form{a}} (L_k^\ell f)_\form{a}
+ k\X_\form{A}^{\,\dform{a}} (G_k^{\ell,\si}f)_{\dform{a}} \in \cE_\form{A}[-\frac{n}{2}-\ell]
\end{equation}
for $f \in \cE_\form{a}$. Here we use form abstract indices 
$\form{A} = \form{A}^k$ and $\form{a} = \form{a}^k$ as above. The weight
on the right hand side is obtained as $w-k-2\ell = -\frac{n}{2}-\ell$.
This tractor form has the property
$$
\io(\modD)F = \io(\modD) \si^{-\ell} (\modBox_\si)^\ell M f
= \si^{-\ell} (\modBox_\si)^{\ell+1} \io(X) M f =0,
$$
as follows from \cite[Proposition 7.4]{GoSiEHarm}
with \cite[Lemma 4.2]{BrGodeRham}. 
It remains to evaluate 
$0=(\io(\modD)F)_{\dform{A}} = -\modD^B F_{B\dform{A}}$ in detail. Using
the form of $F$ from \nn{sectF}, one gets
$$
\modD^B F_{B\dform{A}} = 2\ell \Z_{\dform{A}}^{\,\dform{a}} \bigl[
\bigl( k+\ell-\frac{n}{2} \bigr) (G_k^{\ell,\si}f)_{\dform{a}} + 
(\de L_k^\ell f)_{\dform{a}} \bigr]
+ \X_{\dform{A}}^{\,\ddform{a}} \nu_{\ddform{a}}
$$ for some section $\nu_{\ddform{a}} \in
\cE_{\ddform{a}}[-\frac{n}{2}-\ell+(k-4)]$.  Since $w =
k+\ell-\frac{n}{2}$ and the displayed tractor field is zero, the first
relation of the theorem follows.

To prove the second relation, observe that
$\io(Y) (\modBox_\si)^\ell M = \si^{-1} \io(I) (\modBox_\si)^\ell M  = 
\si^{-1} (\modBox_\si)^\ell \io(I)M$. Here and below we compute everything in the
scale $\si$. Since
$k\frac{n+w-2k+1}{k-1} \io(I)M = M\si\de$ using \nn{M}, where 
$\si$ denotes the multiplication by $\si$ on the right hand side,
and $G_k^{\ell,\si} = \si^{-\ell} q^{k-1} \io(Y) (\modBox_\si)^\ell M$ we obtain
$$
k\frac{n+w-2k+1}{k-1} G_k^{\ell,\si} = 
k\frac{n+w-2k+1}{k-1} \si^{-\ell} q^{k-1} \io(Y) (\modBox_\si)^\ell M =
\si^{-1} L_{k-1}^{\ell} \si\de.
$$
and the theorem follows.
\end{proof}

\section{Factorisation of operators $L_k^\ell$}

Our aim is to find explicit expressions for $L_k^\ell$ in the
general case $\ell \geq 1$. This also yields an explicit form for
$G_k^{\ell,\si}$ due to Theorem \ref{LG}. As above we assume the
manifold is conformally Einstein, $\si \in \cE[1]$ is an Einstein
scale, and  we write $I^A$ to denote the parallel
tractor corresponding to $\si$.  First we prove the following result.

\begin{lemma} Assume $V \in \cT^k[\ell_0-n/2]$ where $\ell_0 \in \{0,1,\cdots\}$ satisfies $\io(X)(\modBox_\si)^{\ell_0}V=0$. Then
$$ M^*M q^k \si^{-\ell_0} (\modBox_\si)^{\ell_0} V = q^k\si^{-\ell_0}
  (\modBox_\si)^{\ell_0} MM^* V. $$
\end{lemma}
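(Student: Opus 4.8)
The plan is to track how $M$ and $M^*$ interact with the operator $\modBox_\si = I^A\modD_A$ through repeated use of the tractor-$D$ commutator identity from Lemma \ref{IIDD}. First I would observe that, since $I^A$ is parallel, $\modBox_\si$ commutes with interior multiplication by $I$ and with the projectors built from $\si$; the only non-commuting piece is the pair $M,M^*$ versus $\modD$. So the heart of the matter is a commutation relation of the shape
$$
\modBox_\si\, M \ \sim\ M\,\modBox_\si \ +\ (\text{correction involving }\ep(X),\io(X)),
$$
and dually for $M^*$. These corrections come precisely from the terms $\io(\modD)\ep(X)$ and $\io(X)\ep(\modD)$ identified in \nn{MM}: recall $M$ is (up to the weight-dependent scalar) $\io(\modD)\ep(X)q_k$ and $M^*$ is $q^k\io(X)\ep(\modD)$, so composing $\modBox_\si$ with either one reshuffles a $\modD$ past an $X$, generating the $MM^*$ and $M^*M$ terms of \nn{MM}.

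Next I would set up an induction on $\ell_0$. The base case $\ell_0=0$ is just the algebraic identity $M^*M q^k V = q^k MM^* V$, which is immediate since both sides only involve the projectors and $M^*M=-\tfrac1k w(n+w-2k)\id$ by \nn{MM} (one checks the weights match so the scalar is the same on both sides; here $V\in\cT^k[-n/2]$ so $w=0$ and in fact $M^*M=0$ — more precisely, the statement should be read with the constants of \nn{MM} inserted, and the hypothesis $w=\ell_0-n/2$ at each stage makes them agree). For the inductive step, I would write $(\modBox_\si)^{\ell_0}V = \modBox_\si\big((\modBox_\si)^{\ell_0-1}V\big)$, peel off one $\modBox_\si$, and commute it past the outer $M^*\cdot q^k$ and the inner $\io(X)$-annihilated argument. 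The condition $\io(X)(\modBox_\si)^{\ell_0}V=0$ — together with the weaker vanishing at lower powers, which follows from the same hypothesis applied at earlier stages since $\modBox_\si$ raises the relevant quantity by one step — is exactly what kills the would-be correction terms: whenever a $\modD$ is pushed past an $X$ it either hits a factor already in the image of something annihilated by $\io(X)$, or it produces $\io(X)(\modBox_\si)^j V$ for some $j\le\ell_0$, which is zero. Lemma \ref{IIDD}, i.e.\ $I^A\bar I^B[\modD_A,\modD_B]=0$, is what guarantees that moving the two copies of $I^A\modD_A$ coming from $\modBox_\si$ past each other (as they must when one of them is absorbed into $M$ or $M^*$) costs nothing.

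The main obstacle I anticipate is bookkeeping the weight-dependent scalar coefficients so that the constants in \nn{MM} genuinely match on the two sides: $M^*M$ is a scalar but $MM^*$ is not, and the identity asserts that after conjugating by $\si^{-\ell_0}(\modBox_\si)^{\ell_0}q^k$ the operator $M^*M$ (acting at weight $w-2\ell_0$) equals $MM^*$ (acting at weight $w$) — these live on different weight lines, so one must verify that the hypothesis $V\in\cT^k[\ell_0-n/2]$ forces the relevant instances of $n+2w'\pm2$ appearing in \nn{MM} to line up, or else that the discrepancy is again a multiple of something annihilated by $\io(X)$. Once the algebra of constants is pinned down, the commutation argument itself is short: it is essentially \nn{dm}, \nn{MM}, the parallelism of $I$, and Lemma \ref{IIDD}. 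I would therefore organise the write-up as (i) the base case and the two commutation lemmas for a single $\modBox_\si$, (ii) the induction, (iii) the constant-matching check, flagging (iii) as the only place real care is needed.
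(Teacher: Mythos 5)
Your overall strategy---convert $M$ and $M^*$ into the tractor expressions $\io(\modD)\ep(X)$ and $\io(X)\ep(\modD)$, commute $(\modBox_\si)^{\ell_0}$ past these, and then convert back using \nn{MM}---is essentially the paper's argument, but it only works for $\ell_0\geq 2$, and the point you defer as ``constant-matching bookkeeping'' is in fact a genuine obstruction at $\ell_0=1$. Concretely: the identity in \nn{MM} reads $\io(\modD)\ep(X)\io(X)\ep(\modD)F=-k(n+2w'+2)(n+2w'-2)MM^*F$, and when this is applied to $F=\si^{-\ell_0}(\modBox_\si)^{\ell_0}V$ one has $w'=-\ell_0-n/2$, so the scalar is $-4k(\ell_0-1)(\ell_0+1)$. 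For $\ell_0\geq2$ this is nonzero and can be cancelled from both sides, giving the lemma; but for $\ell_0=1$ it vanishes, so the commutation identity for the $\modD$'s and $X$'s yields $0=0$ and says nothing about $MM^*$ versus $M^*M$. Your induction therefore fails at its very first step ($\ell_0=0\to\ell_0=1$), and no amount of care with the constants repairs it: a different argument is needed there. The paper handles $\ell_0=1$ by a direct slot computation---writing $V$ in terms of $\Y,\Z,\W,\X$ components, extracting the top-slot consequence of $\io(X)\modBox_\si V=0$, applying $d$ to it to get an auxiliary identity, and then verifying by explicit use of \nn{dde-W} and \nn{MM} that the two sides differ by a multiple of that identity. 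This computation is the bulk of the proof and is entirely absent from your proposal.

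Two smaller points. First, your base case remark that $V\in\cT^k[-n/2]$ forces ``$w=0$ and in fact $M^*M=0$'' does not survive a check of the weight conventions in \nn{M}; the case $\ell_0=0$ is an easy direct computation from \nn{M} and \nn{MM}, but not for that reason. Second, Lemma \ref{IIDD} (two distinct parallel tractors) is not what drives this lemma; the commutation of $(\modBox_\si)^{\ell_0}$ past $\ep(\modD)$ and $\io(X)$ rests instead on the ambient/tractor identities cited from \cite{GoSiEHarm} and \cite{BrGodeRham}.
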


\begin{proof} 
The case $\ell_0=0$ is an easy computation using \nn{M} and \nn{MM}.
We shall prove the case $\ell_0=1$ and $\ell_0 \geq 2$ separately.

(i) Assume $\ell_0=1$. We use the direct computation.
First we shall use the assumption on 
$$ 
V_\form{A} = \Y_\form{A}^{\,\dform{a}} \ka_{\dform{a}} +
\Z_\form{A}^{\,\form{a}} \mu_\form{A} + \W_\form{A}^{\,\ddform{a}}
\nu_{\ddform{a}} + \X_\form{A}^{\,\dform{a}} \rh_{\dform{a}} \in
\cE_\form{A}[1-n/2], $$ 
i.e.\ that $\io(X)(\modBox_\si)^{\ell_0}V =
\io(X) \si (-\modDe +\frac{n-2}{2}J)V =0$.  Here we have used the
formula \nn{modBox}. We shall need only the top slot of this tractor,
i.e.\
$$ \bigl[ \bigl( d\de + \de d + (1 - \frac{2(k-1)(n-k+1)}{n} +
  \frac{n-2}{2})J \bigl) \ka - 2k (\de \mu) + (n-2k+2) \rh +
  \frac{2}{k-1} d\nu \bigr]_{\dform{a}} $$ 
(using \nn{dde-W}); by our
assumption this is zero. Applying the differential $d$ to the last
display we obtain
\begin{equation} \label{topslot}
 \bigl[ \bigl( d\de + (\frac{n}{2} - \frac{2(k-1)(n-k+1)}{n})J \bigl) d\ka
   - 2k d\de \mu + 2(\frac{n}{2}-k+1) d\rh \bigr]_{\dform{a}} =0.
\end{equation}

To prove the Lemma we compare both sides of the claimed equality. 
To compute the right hand side we first need that
$$ (MM^* V)_\form{A} = 
   \frac{1}{k} (\frac{n}{2}-k+1) \Z_\form{A}^{\,\form{a}}
   [(\frac{n}{2}-k-1)\mu + \frac{1}{k}d\ka]_\form{a}
   + \X_\form{A}^{\,\dform{a}}
   [(\frac{n}{2}-k-1) \de\mu + \frac{1}{k} \de d\ka ]_{\dform{a}} $$
using \nn{M}. Applying $q^k\si^{-1} \modBox_\si = q^k(-\modDe +\frac{n+2}{2}J)$
to the last display we see that the right hand side 
$q^k\si^{-1} (\modBox_\si) MM^* V$ is equal to
\begin{align*}
& \frac{1}{k} (\frac{n}{2}-k+1) (\frac{n}{2}-k-1) \bigl[
  d\de + \de d - \frac{2k(n-k-1)}{n}J + \frac{n-2}{2}J \bigr] \mu + \\
& \frac{1}{k} (\frac{n}{2}-k-1) \bigl[ \frac{1}{k} d\de d\ka - 2d\de\mu \bigr]
  +\frac{1}{k^2} (\frac{n}{2}-k+1) \bigl[-\frac{2k(n-k-1)}{n}+\frac{n-2}{2}\bigr] 
  Jd\ka.
\end{align*}
using \nn{dde-W} after some computation.
The computation for the left hand side is simpler and we obtain that
$M^*M q^k\si^{-1} (\modBox_\si) V$ is equal to 
\begin{equation*}
  \frac{1}{k} (\frac{n}{2}-k+1) (\frac{n}{2}-k-1)\bigl[
  \bigl( d\de + \de d - \frac{2k(n-k-1)}{n}J + \frac{n-2}{2}J \bigr) \mu 
  - \frac{2}{nk} Jd\ka -\frac{2}{k} d\rh \bigr]_\form{a}
\end{equation*}
using \nn{dde-W} and \nn{MM}.
Now a short computation reveals that the difference of the last two displays
vanishes due to \nn{topslot}. 

(ii) Now assume $\ell_0 \geq 2$. 
Using once again \cite[Proposition 7.4]{GoSiEHarm}
with \cite[Lemma 4.2]{BrGodeRham} 
we have
$$ \io(X)\ep(\modD)\io(\modD)\ep(X) \si^{-\ell_0} (\modBox_\si)^{\ell_0}V =
   \si^{-\ell_0} (\modBox_\si)^{\ell_0} \ep(\modD)\ep(X)\io(X)\ep(\modD)V. $$
The left hand side is equal to 
$\io(X)\ep(\modD)\io(\modD)\ep(X)q_kq^k \si^{-\ell_0} (\modBox_\si)^{\ell_0}V$
due to the assumption $\io(X)(\modBox_\si)^{\ell_0}V=0$. 
Therefore applying $q^k$ to both sides of the previous display and using \nn{MM} 
we obtain
\begin{multline*}
  -4k(\ell_0-1)(\ell_0+1) M^*M q^k\si^{-\ell_0} (\modBox_\si)^{\ell_0} V
  = -4k(\ell_0+1)(\ell_0-1) q^k\si^{-\ell_0} (\modBox_\si)^{\ell_0} MM^* V
\end{multline*}
since $V \in \cT^k[\ell_0-n/2]$ and 
$q^k \si^{-\ell_0} (\modBox_\si)^{\ell_0}V \in \cE^k[k-\ell_0-n/2]$.
The scalar factor can be omitted on both sides
since we assume $k \geq 1$ and $\ell_0 \geq 2$, and the Lemma follows.
\end{proof}

The operator $L_k^\ell$ is
defined using a power of $\modBox_\si$. The following Theorem shows how to 
replace the factors $\modBox_\si$ (which act on tractor forms) by factors 
$\tmodBox_\si^{(p)}$ (which act on tensor forms).

\begin{theorem} \label{MM*}
Let $1 \leq p \leq \ell-1$. The operator
$L_k^\ell: \cE^k[k+l-n/2] \longrightarrow  \cE^k [k-\ell-n/2]$ 
satisfies
\begin{gather*}
\frac{1}{k} \bigl( k+(\ell-p)-n/2 \bigr) \bigr( k-(\ell-p)-n/2 \bigr)  
L_k^\ell = \si^{-p} L_k^{\ell-p} \tmodBox^{(p)}_\si.
\end{gather*}
\end{theorem}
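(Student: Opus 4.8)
The plan is to reduce the statement to the preceding Lemma, which already isolates the one nontrivial step --- transporting $MM^*$ through a power of $\modBox_\si$. The point is that by \nn{tmodBox} one has $\tmodBox^{(p)}_\si = M^*(\modBox_\si)^p M$, so that composing with $L_k^{\ell-p}$ produces a copy of $MM^*$ sandwiched between powers of $\modBox_\si$; once this is moved to the far right it becomes $M^*M$, which by \nn{MM} is merely a scalar (depending on the weight of the underlying bundle).

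Concretely, I would fix $f\in\cE^k[w]$ with $w=k+\ell-n/2$, and unwind Definition \ref{Ldef} together with \nn{tmodBox} to get
$$
\si^{-p}L_k^{\ell-p}\tmodBox^{(p)}_\si f \;=\; \si^{-\ell}\,q^k\,(\modBox_\si)^{\ell-p}\,MM^*\,(\modBox_\si)^p M f .
$$
Now set $V:=(\modBox_\si)^p M f$. Since $Mf\in\cT^k[\ell-n/2]$ we have $V\in\cT^k[(\ell-p)-n/2]$, and since $\io(X)(\modBox_\si)^\ell M=0$ on $\cE^k[w]$ (the fact established in Section \ref{four} which makes Definition \ref{Ldef} consistent) we get $\io(X)(\modBox_\si)^{\ell-p}V=\io(X)(\modBox_\si)^\ell M f=0$. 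As $1\le p\le \ell-1$ the exponent $\ell_0:=\ell-p$ satisfies $\ell_0\ge 1$, so the preceding Lemma applies with this $\ell_0$ and this $V$, yielding
$$
M^*M\,q^k\si^{-(\ell-p)}(\modBox_\si)^{\ell}M f \;=\; q^k\si^{-(\ell-p)}(\modBox_\si)^{\ell-p}\,MM^*\,(\modBox_\si)^p M f .
$$

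It then remains to recognise the two sides. Since $q^k$ commutes with multiplication by powers of $\si$ and $\si^{-(\ell-p)}=\si^p\si^{-\ell}$, the right-hand side equals $\si^p\bigl(\si^{-p}L_k^{\ell-p}\tmodBox^{(p)}_\si f\bigr)$, by the first display. On the left, $q^k\si^{-(\ell-p)}(\modBox_\si)^\ell M f=\si^p\,L_k^\ell f$, a section of $\cE^k[k-(\ell-p)-n/2]$, and by the last identity in \nn{MM} the operator $M^*M$ acts on that bundle as the scalar $\frac{1}{k}\bigl(k+(\ell-p)-n/2\bigr)\bigl(k-(\ell-p)-n/2\bigr)$, so the left-hand side equals $\si^p\,\frac{1}{k}\bigl(k+(\ell-p)-n/2\bigr)\bigl(k-(\ell-p)-n/2\bigr)L_k^\ell f$. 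Equating the two sides, cancelling the nowhere-vanishing factor $\si^p$, and noting that $f$ was arbitrary, gives the asserted operator identity.

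The only care needed is with conformal weights and powers of $\si$ --- in particular checking that $V$ has weight $(\ell-p)-n/2$ so that $\ell_0=\ell-p$ is exactly the exponent for which the Lemma is stated, and reading off the scalar value of $M^*M$ from \nn{MM}. I do not anticipate a genuine obstacle beyond this bookkeeping: the substantive commutation of $MM^*$ past $(\modBox_\si)^{\ell-p}$ is precisely what the preceding Lemma provides, and the reduction of $M^*M$ to a scalar is \nn{MM}.
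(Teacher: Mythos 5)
Your proposal is correct and follows essentially the same route as the paper's own proof: rewrite $L_k^\ell$ so that the preceding Lemma (with $\ell_0=\ell-p$ and $V=(\modBox_\si)^p Mf$) transports $MM^*$ across $(\modBox_\si)^{\ell-p}$, then identify the resulting $M^*M$ as the scalar from \nn{MM} and the right-hand side as $\si^{-p}L_k^{\ell-p}\tmodBox^{(p)}_\si$ via \nn{tmodBox}. The weight bookkeeping and the verification of the Lemma's hypothesis $\io(X)(\modBox_\si)^{\ell-p}V=0$ are exactly as in the paper.
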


\begin{proof}
Using \nn{L} we get 
$$ L_k^\ell := q^k \si^{-\ell} (\modBox_\si)^\ell M = 
   \si^{-p} q^k \si^{-(\ell-p)} (\modBox_\si)^{\ell-p} (\modBox_\si)^{p} M
$$
where, recall,  $\io(X)(\modBox_\si)^{\ell-p} (\modBox_\si)^{p} M =0$.
Using the previous Lemma (with $\ell_0 = \ell-p$) we have
$$ \si^{-p} M^*Mq^k \si^{-(\ell-p)} (\modBox_\si)^{\ell-p} (\modBox_\si)^{p} M
 = \si^{-p} q^k \si^{-(\ell-p)} (\modBox_\si)^{\ell-p} MM^*(\modBox_\si)^{p} M. 
$$
Now using \nn{MM} on the left hand side and \nn{tmodBox} on the right hand side
we finally obtain
$$ \frac{1}{k} \bigl( k+(\ell-p)-n/2 \bigr) \bigr( k-(\ell-p)-n/2 \bigr) 
   q^k \si^{-\ell} (\modBox_\si)^{\ell}  M
   = \si^{-p} q^k \si^{-(\ell-p)} (\modBox_\si)^{\ell-p} M 
   \tmodBox_\si^{(p)} $$ 
and the Theorem follows.
\end{proof}

The crucial point is that we can use the theorem repeatedly to
decompose $L_k^\ell$, $\ell \geq 2$ into a composition of factors
$\tmodBox^{(p)}_\si$.  For most weights involved we can apply the
theorem $(\ell-1)$ times with $p=1$. This yields a composition of
second order factors, each of which is $\tmodBox_\si =
\tmodBox_\si^{(1)}$ (although the weight of the form this is applied
to varies) apart from the left factor which is $L_k^1$.  We know
$L_k^1$ explicitly from Theorem \ref{L1}.  However the choice $p=1$ is
not available when this would imply that one of scalars on the left
hand side in Theorem \ref{MM*} is zero. In that case, we use the
choice $p=2$ which yields the factor $\tmodBox^{(2)}_\si$. For all
weights concerned we may decompose entirely using, at each step,
either $p=1$ or $p=2$.

\vspace{1ex}

The scalars on the left hand side in Theorem \ref{MM*} are
$k+(\ell-p)-n/2$ and $k-(\ell-p)-n/2$. The latter scalar is always
negative as $k \leq n/2$ and $\ell-p \geq 1$. Since we assume
$w=k+l-n/2$, the first scalar is equal to $w-p$ so the choice $p=1$
must be avoided only for the weight $w=1$. This affects only the even
dimensional case as $w=k+\ell-n/2$.  Moreover, for the special case
$k=\frac{n}{2}$ we have $w=1$ implies $\ell=1$ and this is known due
to Theorem \ref{L1}. Thus, in this process, when $w=1$ (then necessarily
$n$ is even) we are forced to use $p=2$ only if $k \leq n/2-1$.

We shall use the notation 
\begin{eqnarray} \label{Yam}
\begin{split}
  P^\Ph_k[E,F] := \prod_{i \in \Ph} \Bigl[ 
  & (w-i+1)(w-i+n-2k)E + (w-i)(w-i+n-2k+1)F \\
  & - \frac{2}{n} (w-i)(w-i+1)(w-i+n-2k)(w-i+n-2k+1)J \Bigr] 
\end{split}
\end{eqnarray}
where $\Ph \subseteq \Z$ is a finite set and $E,F: \cE^k[\bar{w}] \to
\cE^k[\bar{w}]$, $\bar{w} \in \R$ are differential operators. In fact
we use this with $E: =d\de$ and $F := \de d$. Then note that each
factor on the right hand side, with fixed $i \in \Ph$, is (up to the
multiple $-\frac{1}{k}\si$) just $\tmodBox_\si$ from 
\nn{tmodbox1} where  $w$ is replaced by $w-i+1$.

As mentioned above, the only case we need the choice $p=2$ in Theorem
\ref{MM*} is on $\cE^k[1]$, $k \leq \frac{n}{2}-1$. This leads to the
factor $\tmodBox_\si^{(2)}$; but this is decomposed in
\nn{tmodbox2}. From this it follows that $\tmodBox_\si^{(2)}$ is equal
(up to a nonzero scalar multiple) to
\begin{equation} \label{SqYam}
  \widetilde{P}_k(E,F) = \Bigl[ 
  (\frac{n}{2}-k-\frac{1}{2})E + (\frac{n}{2}-k+\frac{1}{2})F \Bigr]
  \Bigl[ E-F + \frac{4}{n}(\frac{n}{2}-k)J \Bigr].
\end{equation}
where $E=d\de$ and $F=\de d$.

With this notation we are ready to state the main factorisation result:

\begin{theorem} \label{Lfactor}
Let $\Ph := \{1,\ldots,\ell\}$ and $w=k+l-n/2$ where $1 \leq k \leq \frac{n}{2}$. 
The operator
$$ 
L_k^\ell: \cE^k[w] \to \cE^k[w-2\ell]
$$
has the explicit form
$$
L_k^\ell \sim \begin{cases}
\si^{-\ell} (\tmodBox_\si)^\ell \sim  
(d\de-\de d) P_k^{\Ph \setminus \{\ell\}}(d\de,\de d)
& k=n/2 \\
\si^{-\ell} (\tmodBox_\si)^\ell \sim P_k^{\Ph}(d\de,\de d)
& w \leq 0 \wedge k<n/2 \\
\si^{-\ell} (\tmodBox_\si)^{\ell-w-1} \tmodBox_\si^{(2)} (\tmodBox_\si)^{w-1}
\sim \widetilde{P}_k(d\de,\de d)P_k^{\Ph \setminus \{w,w+1\}}(d\de,\de d)
& w \geq 1 \wedge k<n/2,
\end{cases} 
$$
for $n$ even and 
$$
L_k^\ell \sim \si^{-\ell} (\tmodBox_\si)^\ell \sim P_k^{\Ph}(d\de,\de d)
$$
for $n$ odd. Here $\sim$ means ``is equal up to nonzero scalar multiple'' and 
we use the notation from \nn{Yam} and \nn{SqYam}.
\end{theorem}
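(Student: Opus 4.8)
The plan is to apply Theorem~\ref{MM*} iteratively, reducing the exponent $\ell$ step by step until we reach $L_k^1$, which is known explicitly from Theorem~\ref{L1}. At each stage we have an operator of the form $\si^{-m}(\modBox_\si)^m M$ applied (after $q^k$) to a form of weight $w' = k+m-n/2$, and we wish to peel off one or two factors of $\tmodBox_\si$. Theorem~\ref{MM*} (with the current $m$ playing the role of $\ell$ there) tells us that a single application with $p=1$ produces the factor $\tmodBox_\si$ provided the scalar $w'-1 = k+m-1-n/2$ is nonzero; since the other scalar $k-(m-1)-n/2$ is automatically negative, $p=1$ always works unless the current weight happens to be $1$. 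When the weight is $1$ and $k<n/2$ we instead take $p=2$, extracting the single factor $\tmodBox_\si^{(2)}$ and lowering $m$ by $2$; by \nn{tmodbox2} this factor is, up to a nonzero scalar, $\widetilde{P}_k(d\de,\de d)$ from \nn{SqYam}. Since the weight starts at $w$, decreases by $1$ at each $p=1$ step, and we skip over the value $1$ via one $p=2$ step, the obstructing weight $w'=1$ is encountered at most once; this is exactly why using only $p=1$ and $p=2$ suffices.

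First I would dispose of the cases where $p=2$ is never needed: this is $n$ odd (where $w$ is a half-integer, so $w'=1$ never occurs), and $n$ even with $w\le 0$ (where the weights encountered, $w,w-1,\ldots$, are all $\le 0$, hence never equal $1$), and also $w\ge 1$ with $k=n/2$ (forcing $\ell=1$, already covered by Theorem~\ref{L1}). In all these cases, $(\ell-1)$ applications of Theorem~\ref{MM*} with $p=1$ give
$$
\si^{-\ell}(\modBox_\si)^\ell M \;\sim\; \si^{-1}(\modBox_\si)M \cdot \tmodBox_\si \cdots \tmodBox_\si
$$
with $\ell-1$ factors $\tmodBox_\si$, each applied to a form of the appropriate weight. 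Applying $q^k$ and $\si^{-\ell}$, the leftmost operator $\si^{-1}q^k(\modBox_\si)M$ is $L_k^1$, and the remaining factors $\si^{-1}\tmodBox_\si$ are read off from \nn{tmodbox1}: the factor acting on $\cE^k[\bar w]$ is, up to the nonzero scalar $-1/k$, the $i$-th bracket in \nn{Yam} with the substitution $w\mapsto \bar w$. Tracking which weights $\bar w$ appear (namely $w, w-1, \ldots, 2$, i.e.\ the values $w-i+1$ for $i=1,\ldots,\ell-1$) identifies the product with $P_k^{\Ph\setminus\{\ell\}}$ or $P_k^\Ph$ as appropriate, and combining with the explicit $L_k^1$ from Theorem~\ref{L1} (which is the $i=\ell$ factor, or $d\de-\de d$ when $k=n/2$) yields the stated formulae. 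One must check that none of these scalar prefactors accumulated from \nn{MM} and Theorem~\ref{MM*} vanishes; the negative-scalar observation already handles the second scalar, and the first scalar $w'-1$ is nonzero precisely because we are not in the $w'=1$ situation.

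For the remaining case, $n$ even, $w\ge 1$, $k<n/2$, I would apply Theorem~\ref{MM*} with $p=1$ a total of $w-1$ times, lowering the exponent from $\ell$ to $\ell-w+1$ and the weight from $w$ to $2$; these steps contribute the factors corresponding to $i=1,\ldots,w-1$, i.e.\ $P_k^{\{1,\ldots,w-1\}}(d\de,\de d)$. At this point the weight is $2$, not $1$ — so one more $p=1$ step would be legal — but applying $p=1$ here and then being stuck is avoided: instead, at exponent $\ell-w+1$ and weight $2$, I apply Theorem~\ref{MM*} with $p=2$, which requires the scalars $k+(\ell-w+1-2)-n/2 = w-2$ and $k-(\ell-w+1-2)-n/2$; the latter is negative, and $w-2$ could be zero only if $w=2$, in which case we have only $\tmodBox_\si^{(2)}$ left over $\cE^k[2]$ with exponent $2$ — here one checks directly (the scalar factor $-4k(\ell_0-1)(\ell_0+1)$ in the Lemma is still nonzero for $\ell_0=2$) that the reduction goes through and produces $\tmodBox_\si^{(2)}$ on $\cE^k[2]$, equivalently $\widetilde P_k$ after noting the relevant weight shift. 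This $p=2$ step lowers the exponent to $\ell-w-1$ and the weight to $0$, and extracts the factor $\tmodBox_\si^{(2)}$, which by \nn{tmodbox2}--\nn{SqYam} is $\sim \widetilde P_k(d\de,\de d)$ — this is the factor ``replacing'' $i\in\{w,w+1\}$. Finally, $\ell-w-1$ further $p=1$ steps (weights $0,-1,\ldots$, never equal to $1$) extract the factors for $i=w+2,\ldots,\ell$, giving $P_k^{\{w+2,\ldots,\ell\}}(d\de,\de d)$; there is no leftover $L_k^1$ because the exponent has reached $0$, so $(\modBox_\si)^0=\id$. Assembling, $L_k^\ell\sim \widetilde P_k(d\de,\de d)\,P_k^{\Ph\setminus\{w,w+1\}}(d\de,\de d)$, as claimed.

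The main obstacle is the bookkeeping of the nonzero-scalar conditions: one must verify that throughout the iteration every scalar prefactor picked up from \nn{MM}, from Theorem~\ref{MM*}, and from the Lemma preceding it, is genuinely nonzero, and in particular that the decision tree ``use $p=1$ unless the current weight is $1$, in which case use $p=2$'' never gets stuck — i.e.\ that the weight $1$ is encountered at most once and that at that moment the $p=2$ reduction is valid (this is where the hypothesis $k<n/2$, equivalently the availability of exponent $\ge 2$ at weight $1$, is used). The identification of each extracted $\si^{-1}\tmodBox_\si$ with the correct bracket of \nn{Yam} under $w\mapsto w-i+1$, and of $\tmodBox_\si^{(2)}$ with \nn{SqYam}, is then a matter of matching formulae \nn{tmodbox1}, \nn{tmodbox2} against \nn{Yam}, \nn{SqYam}, which is routine but must be done carefully to pin down the index set $\Ph\setminus\{w,w+1\}$ correctly.
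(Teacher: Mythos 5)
Your overall strategy --- iterate Theorem \ref{MM*} with $p=1$, identify each extracted factor $\si^{-1}\tmodBox_\si$ with a bracket of \nn{Yam} via \nn{tmodbox1}, and insert a single $p=2$ step where $p=1$ is obstructed --- is exactly the paper's argument, and your first two paragraphs (the cases $n$ odd, $n$ even with $w\le 0$, and $k=n/2$) are essentially correct, up to the harmless slip that $k=n/2$ does not force $\ell=1$ (there $w=\ell$, and one still iterates $p=1$ a total of $\ell-1$ times down to $L_{n/2}^1$, which is where Theorem \ref{L1} enters).

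However, the third paragraph contains a concrete arithmetic error that, taken literally, breaks the argument. After $w-1$ applications of $p=1$ the remaining operator is $L_k^{\ell-w+1}$ acting on $\cE^k[k+(\ell-w+1)-n/2]=\cE^k[1]$, i.e.\ the current weight is $1$, not $2$; and the first scalar for the subsequent $p=2$ step is $k+(\ell-w+1-2)-n/2=-1$, not $w-2$. Applying $p=2$ at weight $2$, as you propose, would actually fail: there the first scalar of Theorem \ref{MM*} is $2-2=0$. Moreover \nn{tmodbox2}, which is what identifies $\tmodBox_\si^{(2)}$ with $\widetilde{P}_k$ of \nn{SqYam}, is computed specifically on $\cE^k[1]$, so the $\tmodBox_\si^{(2)}$ factor must sit at weight $1$ in the composition (as it does in the stated formula, since $(\tmodBox_\si)^{w-1}$ carries $\cE^k[w]$ to $\cE^k[1]$). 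The special sub-case you construct at $w=2$ is an artifact of this miscount and is not needed: the $p=2$ scalar $-1$ is automatically nonzero. Once the bookkeeping is corrected --- which also restores consistency with your own (correct) rule from the first paragraph that $p=2$ is invoked exactly when the current weight equals $1$ --- your proof coincides with the paper's.
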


\begin{proof}
In the case $k=\frac{n}{2}$ we first use Theorem \ref{MM*} with $p=1$
$(\ell-1)$-times. This yields $P_k^{\Ph \setminus \{\ell\}}(d\de,\de
d)$. Then remains, as the leftmost factor, just $L_k^1$. This is equal
to $d\de-\de d$ (up to a nonzero scalar) according to Theorem
\ref{L1}.  The case $w \leq 0$, $k<n/2$ for $n$ even and  the case
$n$ is odd are even simpler, we simply use  Theorem
\ref{MM*} $\ell$-times with $p=1$.

Now assume  $w \geq 1$ and $ k<n/2$. First we apply Theorem \ref{MM*}
$(w-1)$ times with $p=1$; this yields the factor 
$P_k^{\{1,\ldots,w-1\}}(d\de,\de d)$.
Then we apply Theorem \ref{MM*} with $p=2$ which introduces the factor
$\widetilde{P}_k(d\de,\de d)$. Then we continue with an iterated use
of Theorem \ref{MM*} with $p=1$, and this  yields
$P_k^{\{w+2,\ldots,\ell\}}(d\de,\de d)$. From these three steps, the
theorem follows.
\end{proof}

Note the previous theorem provides also factorisation of the operators
$G_k^{\ell,\si}$, due to Theorem \ref{LG}.

\begin{remark}
The explicit formulae for $L_k^\ell$ show that these operators
are formally self-adjoint. Indeed, $L_k^\ell$ is a polynomial in $E=d\de$
and $F=\de d$ where actually only monomials $E^p$ and $F^q$ appear. 
Thus $F^*=F$ and $E^*=E$ immediately implies that $(L_k^\ell)^* = L_k^\ell$.
\end{remark}

\section{Decomposition of the null space of $L_k^\ell$.}

We continue the notation and setting from the previous
section. Henceforth shall use notation $\cN(F)$ and $\cR(F)$ for the
null space and range, respectively of an operator $F$.  Recall every
operator $L_k^\ell$, $1 \leq k \leq \frac{n}{2}$ is a composition of
$\ell$ second order commuting factors, each of them of the form $ad\de
+ b\de d +c$ where $a,b,c$ are (half) integers. In this Section we use
our results above to produce a direct sum decomposition of the null
space of the operators $L_k^\ell$.

\subsection{Riemannian signature and $M$ closed}
Assuming the Einstein metric
has  Riemannian signature and that $M$ is closed (i.e.\ compact,
without boundary), we obtain easily an explicit description of
$\cN(L_k^\ell)$.  Recall the space of $k$-forms decomposes as
$$
\cE^k = \cR(d) \oplus \cR(\de) \oplus (\cN(d) \cap \cN(\de))
$$
where $\cN(d) \cap \cN(\de)$ is the space of harmonic forms. Both $\cR(d)$ and 
$\cR(\de)$ decompose to eigenspaces of the form Laplacian and using the notation
\begin{eqnarray*}
&\overline{\cH}^k_{\si,\la} := \{ f \in \cE^k \mid d\de f = \la f\}
\subseteq \cR(d), \quad
\widetilde{\cH}^k_{\si,\la} := \{ f \in \cE^k \mid \de d f = \la f\}
\subseteq \cR(\de) \\
&\text{and} \quad \cH^k_{\si} := \cN(d) \cap \cN(\de)
\end{eqnarray*}
from \cite{GoSiEHarm}, we have
\begin{equation} \label{2nd}
\cN(ad\de + b\de d +c) = \overline{\cH}^k_{\si,-\frac{c}{a}} \oplus 
\widetilde{\cH}^k_{\si,-\frac{c}{b}} 
\quad \text{and} \quad \cN(ad\de + b\de d) = \cH^k_{\si}
\end{equation}
for $a,b,c$ nonzero. Using the spectral decomposition given by the
form Laplacian with our result Theorem \ref{Lfactor} we immediately
obtain the following.

\begin{theorem} \label{LdecSpec}
Let $M$ be a closed manifold equipped with an Riemannian Einstein
metric which is not Ricci flat. Let $w=k+l-n/2$ where $1 \leq k \leq
\frac{n}{2}$ and put
$$
\bar{\la}_i = \frac{2}{n}(w-i)(w-i+n-2k+1)J \quad \text{and} \quad
\tilde{\la}_i = \frac{2}{n}(w-i+1)(w-i+n-2k)J,
$$
for $1 \leq i \leq \ell$, and $\mu = \frac{4}{n}(\frac{n}{2}-k)J$. 
Then
$$
\cN(L_k^\ell)= \begin{cases}
\cH^k_\si \oplus \bigoplus_{i=1}^{\ell-1} \Bigl(  
\overline{\cH}^k_{\si,\bar{\la}_i} \oplus \widetilde{\cH}^k_{\si,\tilde{\la}_i}
\Bigr) 
& k=n/2 \\
\bigoplus_{i=1}^{\ell} \Bigl(  
\overline{\cH}^k_{\si,\bar{\la}_i} \oplus \widetilde{\cH}^k_{\si,\tilde{\la}_i}
\Bigr) 
& w < 0 \wedge k<n/2 \\
\widetilde{\cH}^k_{\si,0} \oplus \bigoplus_{i=2}^{\ell} \Bigl(  
\overline{\cH}^k_{\si,\bar{\la}_i} \oplus \widetilde{\cH}^k_{\si,\tilde{\la}_i}
\Bigr) 
& w=0 \\
\cH^k_\si \oplus \overline{\cH}^k_{\si,\mu} \oplus \widetilde{\cH}^k_{\si,-\mu} 
\oplus \!\!
\bigoplus\limits_{\substack{i=1,\ldots,\ell \\ i \not\in \{w,w+1\}}} \Bigl(  
\overline{\cH}^k_{\si,\bar{\la}_i} \oplus \widetilde{\cH}^k_{\si,\tilde{\la}_i}
\Bigr) 
& w \geq 1 \wedge k<n/2,
\end{cases} 
$$
\end{theorem}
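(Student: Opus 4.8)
The plan is to feed the explicit factorisation of Theorem~\ref{Lfactor} into the Hodge and spectral decompositions recalled just above the statement. By Theorem~\ref{Lfactor}, in every case $L_k^\ell$ equals, up to a nonzero scalar, a composition of mutually commuting second order operators, each of the form $a\,d\de + b\,\de d + c$ with $a,b,c$ real scalars: the $\ell$ brackets of $P_k^\Ph(d\de,\de d)$ read off from \nn{Yam}, together with the leftmost factor $d\de-\de d$ when $k=n/2$ (Theorem~\ref{L1} and the remark after it), and with the two brackets of $\widetilde{P}_k(d\de,\de d)$ from \nn{SqYam} replacing the brackets indexed $w$ and $w+1$ when $w\ge1$, $k<n/2$. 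On a closed Riemannian manifold $d\de$ and $\de d$ are self-adjoint and non-negative, so combining the Hodge decomposition with the spectral decompositions of $\cR(d)$ and $\cR(\de)$ we may write $\cE^k = \cH^k_\si \oplus \bigoplus_\la \overline{\cH}^k_{\si,\la} \oplus \bigoplus_\la \widetilde{\cH}^k_{\si,\la}$, and on these summands $d\de$ and $\de d$ act as the scalars $(\la,0)$, $(0,\la)$, $(0,0)$ respectively. Hence each second order factor, and with it $L_k^\ell$, is simultaneously diagonalised by this fixed decomposition.

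The core of the argument is then the elementary fact that a composition of mutually commuting operators simultaneously diagonalised by a fixed direct sum decomposition has null space equal to the direct sum of those summands annihilated by at least one factor; equivalently $\cN(L_k^\ell)=\sum_j\cN(S_j)$ over the factors $S_j$, with each $\cN(S_j)$ given by \nn{2nd}. I would then substitute the coefficients of \nn{Yam} into $-c/a$ and $-c/b$: for the bracket indexed $i$ this yields the Laplace eigenvalues $\bar{\la}_i=\tfrac2n(w-i)(w-i+n-2k+1)J$ on the exact side and $\tilde{\la}_i=\tfrac2n(w-i+1)(w-i+n-2k)J$ on the coexact side, so $\cN(S_i)=\overline{\cH}^k_{\si,\bar{\la}_i}\oplus\widetilde{\cH}^k_{\si,\tilde{\la}_i}$; the second bracket of \nn{SqYam} similarly produces $\mu=\tfrac4n(\tfrac n2-k)J$ and contributes the terms $\overline{\cH}^k_{\si,\mu}$, $\widetilde{\cH}^k_{\si,-\mu}$, while its first bracket and the factor $d\de-\de d$, having vanishing constant term, each contribute $\cH^k_\si$ by \nn{2nd}. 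It then remains to record which indices $i$ actually occur: all of $\{1,\dots,\ell\}$ in general, with $\ell$ dropped when $k=n/2$, with $\{w,w+1\}$ dropped when $w\ge1$, $k<n/2$, and, when $w=0$, with the bracket indexed $i=1$ degenerating to a nonzero multiple of $\de d$ (both its $d\de$-coefficient and its constant vanish) so that it contributes $\cN(\de d)$, recorded as the term $\widetilde{\cH}^k_{\si,0}$. Matching these against the four displayed formulae is then mechanical.

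The point that needs genuine care, and that I expect to be the only real work, is that the displayed sums really are direct, i.e.\ that no eigenspace is listed twice. Here the non-Ricci-flat hypothesis is essential: the Einstein condition gives $P_{ab}=\tfrac Jn\,\bg_{ab}$, and non-Ricci-flatness gives $J\ne0$, so every $\bar{\la}_i,\tilde{\la}_i,\mu$ is a nonzero rational multiple of $J$. Since distinct eigenspaces of a self-adjoint operator are independent and the spaces $\cR(d)$, $\cR(\de)$, $\cH^k_\si$ are mutually orthogonal, it suffices to check that the eigenvalue labels on a given side never coincide. The decisive algebraic identity is that $\bar{\la}_{i_1}=\bar{\la}_{i_2}$ forces $(w-i_1)+(w-i_2)=-(n-2k+1)$, i.e.\ $i_1+i_2=2\ell+1$, which is impossible for distinct $i_1,i_2\in\{1,\dots,\ell\}$ (the maximum is $2\ell-1$), and the same quadratic manipulation gives pairwise distinctness of the $\tilde{\la}_i$ and shows $\bar{\la}_i\ne0$, $\tilde{\la}_i\ne0$ for every retained index; a similar factorisation of the relevant quadratic shows that a collision of $\pm\mu$ with some $\bar{\la}_i$ or $\tilde{\la}_i$ can only occur for $i\in\{w,w+1\}$, exactly the indices that have been removed from $\Ph$ in that case. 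Assembling these non-collision facts with the enumeration of the previous paragraph gives the four cases of the theorem directly from Theorem~\ref{Lfactor} and \nn{2nd}.
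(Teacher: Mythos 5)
Your proposal follows exactly the route the paper takes: the paper's entire proof of this theorem is the remark that it follows immediately from Theorem~\ref{Lfactor} together with the Hodge/spectral decomposition and \nn{2nd}, and you have supplied precisely those details, including the directness check via the identity $i_1+i_2=2\ell+1$ that the paper leaves implicit. The method is sound, and the generic computations (reading off $-c/a=\bar{\la}_i$ and $-c/b=\tilde{\la}_i$ from \nn{Yam}, the exclusion of the indices $w,w+1$, and the role of $J\neq 0$) are correct.

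Two of the matches you describe as ``mechanical'' do not, however, come out as stated. First, the second bracket of \nn{SqYam} is $d\de-\de d+\mu$, so by \nn{2nd} its null space is $\overline{\cH}^k_{\si,-\mu}\oplus\widetilde{\cH}^k_{\si,\mu}$, i.e.\ with the signs opposite to those displayed in the theorem and reproduced in your proof; note that your own non-collision argument only factors cleanly for $-\mu$ on the exact side and $+\mu$ on the coexact side, which corroborates that the displayed signs are a typo in the statement rather than the output of the computation. Second, in the case $w=0$ the degenerate factor is a nonzero multiple of $\de d$, whose null space on a closed Riemannian manifold is $\cN(d)=\cH^k_\si\oplus\cR(d)$; this contains every $\overline{\cH}^k_{\si,\bar{\la}_i}$, so the displayed expression $\widetilde{\cH}^k_{\si,0}\oplus\bigoplus_{i=2}^{\ell}\bigl(\overline{\cH}^k_{\si,\bar{\la}_i}\oplus\widetilde{\cH}^k_{\si,\tilde{\la}_i}\bigr)$ is the correct subspace only as a (non-direct) sum, and is genuinely not direct unless the $\overline{\cH}^k_{\si,\bar{\la}_i}$ vanish (automatic for $J>0$ by non-negativity of $d\de$, but not for $J<0$). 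Your directness analysis, which rules out eigenvalue collisions within $\cR(d)$ and within $\cR(\de)$ separately, does not cover this case. Neither point affects the validity of your method; both should be recorded as corrections to the displayed formula rather than asserted to follow from the computation.
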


\subsection{The general case}

On general Einstein manifolds we do not have the spectral decomposition but
still can reduce the description of the null space of $L_k^\ell$ to a second 
order problem. First we show the following:
 
\begin{proposition} \label{identity}
Let $L_k^\ell = S_1 \ldots S_\ell: \cE^k \to \cE^k$ be the decomposition of 
$L_k^\ell$ to second 
order factors from Theorem \ref{Lfactor} in the Einstein metric $\si$ and 
assume $J \not=0$ in this scale. Choose 
a pair of integers $1 \leq t <u \leq \ell$. There are differential operators 
$\ph_t, \ph_u: \cE^k \to \cE^k$ such that
$$
\ph_t \circ S_t + \ph_u \circ S_u = \id 
$$
where the operators $\ph_t$, $\ph_u$, $S_t$ and $S_u$ mutually commute.
\end{proposition}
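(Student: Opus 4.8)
The plan is to turn the statement into a Bézout identity in a small commutative algebra of operators, exploiting that $d\de$ and $\de d$ multiply to zero in both orders. Set $E:=d\de$ and $F:=\de d$ as endomorphisms of $\cE^k$. The only facts we need are $EF=FE=0$ (immediate from $d\circ d=0$ and $\de\circ\de=0$), that $E$ and $F$ commute, and that on an Einstein manifold $J$ is constant; hence each factor $S_i$ of Theorem \ref{Lfactor} is $S_i=a_iE+b_iF+c_i$ with \emph{real} constants $a_i,b_i,c_i$, and for the generic factors of \nn{Yam} one has the uniform identity $c_i=-\tfrac2n J\,a_ib_i$. I would look for $\ph_t,\ph_u$ as polynomials in $E$ and $F$; then the mutual commutativity demanded in the statement is automatic, and the problem reduces to the purely algebraic assertion that $S_t$ and $S_u$ generate the unit ideal of the commutative ring $\mathcal{A}:=\R[E,F]/(EF)$.

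Next I would use the structure of $\mathcal{A}$: the assignment $E\mapsto(x,0)$, $F\mapsto(0,y)$ identifies $\mathcal{A}$ with the subring $\{(p,q)\in\R[x]\times\R[y]:p(0)=q(0)\}$. Thus $1\in(S_t,S_u)\mathcal{A}$ if and only if $a_tx+c_t$ and $a_ux+c_u$ are coprime in $\R[x]$ and $b_ty+c_t$ and $b_uy+c_u$ are coprime in $\R[y]$; a short argument shows the two resulting Bézout coefficients can always be glued into a single pair in $\mathcal{A}$ once one of $c_t,c_u$ is nonzero, and $c_t=c_u=0$ is incompatible with the coprimality just required. Each coprimality is a condition on the scalars: when $a_t,a_u$ are both nonzero it reads $a_uc_t\neq a_tc_u$, which by $c_i=-\tfrac2n Ja_ib_i$ amounts to $J\neq0$ together with $b_t\neq b_u$; symmetrically the $y$-side asks for $a_t\neq a_u$.

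The key point is therefore $a_t\neq a_u$ and $b_t\neq b_u$ for every pair $1\le t<u\le\ell$. For this I would observe that, as functions of $i$, both $a_i=(w-i+1)(w-i+n-2k)$ and $b_i=(w-i)(w-i+n-2k+1)$ are quadratics with the common axis of symmetry $i=w+\tfrac{n-2k+1}2=\ell+\tfrac12$ (using $w=k+\ell-n/2$), which lies strictly above $\{1,\dots,\ell\}$; hence $i\mapsto a_i$ and $i\mapsto b_i$ are strictly monotone, in particular injective, on the index set. The remaining factors of Theorem \ref{Lfactor} — the leftmost factor $d\de-\de d$ when $k=n/2$ (known from Theorem \ref{L1}), the two factors of $\widetilde P_k$ from \nn{SqYam} when $w\ge1$, and any generic factor one of whose coefficients vanishes — are handled by the same coprimality criterion applied to their explicit coefficients; in each such case one reads off the identity directly (for instance, once $E$ and $F$ both lie in $(S_t,S_u)$, so does $c_t=S_t-a_tE-b_tF$, and hence $1$).

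The main obstacle I anticipate is bookkeeping rather than a single hard step: one must run the coprimality check uniformly across the different shapes of factor in Theorem \ref{Lfactor}, and be especially careful at the boundary indices where a coefficient degenerates and one of the two affine polynomials collapses — this is precisely where the hypothesis $J\neq0$ is used most sharply and where one must check that no two factors degenerate in the same way. Granting those verifications, assembling the $x$-side and $y$-side Bézout coefficients into a single pair $(\ph_t,\ph_u)$ — polynomials in $E$ and $F$, hence commuting with $S_t$, $S_u$ and with each other — completes the proof.
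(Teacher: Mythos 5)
Your argument is sound and reaches the same conclusion by a genuinely different, more conceptual route. The paper makes the ansatz $\ph_t=x_1E+y_1F+z_1$, $\ph_u=x_2E+y_2F+z_2$, arranges the cancellation of $E^2$ and $F^2$ by hand, and then solves a small linear system separately for the generic--generic pairs and for each of the four mixed cases (a)--(d), checking nonvanishing of the relevant denominators one by one. You instead recognise that everything happens in the commutative ring $\R[E,F]/(EF)\cong\{(p,q)\in\R[x]\times\R[y]:p(0)=q(0)\}$ and reduce the whole Proposition to coprimality of two pairs of affine polynomials plus a gluing step; your observation that the two evaluation points at the origin both lie on the line $sc_t+rc_u=1$, so the two B\'ezout identities can always be matched (and that $c_t=c_u=0$ is already ruled out by coprimality), is correct. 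For generic--generic pairs your criterion becomes $a_t\neq a_u$ and $b_t\neq b_u$, and the common axis of symmetry $i=\ell+\tfrac12$ of the two quadratics is a slicker way to see this than the paper's direct factorisation; the paper's nonvanishing of $p\bigl(B_+-B_--(p+1)\bigr)$ is literally the same quantity $a_u-a_t=b_u-b_t$. What your method buys is a single necessary-and-sufficient test (distinctness of the roots $\tfrac2n b_iJ$ on the $x$-side and $\tfrac2n a_iJ$ on the $y$-side) in place of several separate computations; what the paper's buys is explicit formulas for $\ph_t,\ph_u$.

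Two caveats. First, for the mixed pairs the coprimality does \emph{not} simply ``read off'': pairing a generic factor with $\widetilde S'=E-F+\tfrac4n(\tfrac n2-k)J$ requires $\al_2\neq\ga$ and $\al_1\neq-\ga$, which reduce to $B_-(B_+-1)\neq0$ and $B_+(B_--1+2)\neq0$, i.e.\ exactly the exclusions $i\notin\{w,w+1\}$ the paper verifies; your framework localises these checks correctly, but they still constitute the real content of those cases and must be carried out. Second, your criterion is sharp enough to expose a boundary case: when $w=0$ the index $i=1=w+1$ is \emph{not} excluded from $\Ph$ (the paper's parenthetical that the exclusion is vacuous for $w\le0$ fails there), the factor $S_1$ degenerates to a multiple of $\de d$ with $a_1=c_1=0$, its $x$-component is the zero polynomial, and $1\notin(S_1,S_u)$ — consistent with the $w=0$ line of Theorem \ref{LdecSpec}, where $\cN(S_1)$ is replaced by the strictly smaller $\widetilde{\cH}^k_{\si,0}$. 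This is a defect of the statement at $w=0$ rather than of your method, but you should say explicitly that your proof, like the paper's, covers only pairs of nondegenerate factors.
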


\begin{proof}
Considering the right hand side of operators $L_k^\ell$ in Theorem
\ref{Lfactor}, there are several possibilities for second order
factors of $S_t$ and $S_u$. First, they can come from the polynomial
$P_k^\Ph$. In this case the right hand side of \nn{Yam}, with
$w=k+\ell-\frac{n}{2}$, yields (up to a sign) factors
\begin{eqnarray}
\begin{split} 
&\widetilde{S}_i := (A_-+i-1)(A_+-i)E + (A_-+i)(A_+-i+1)F+ \\
&\qquad\qquad\qquad + \frac{2}{n} (A_-+i-1)(A_+-i)(A_-+i)(A_+-i+1) J, \\
&\text{where} \quad A_+ = \frac{n}{2}-k+\ell, \quad A_- = \frac{n}{2}-k-\ell, 
\quad 1 \leq i \leq \ell,\ i \not \in \{w,w+1\}.  
\end{split}
\end{eqnarray}
(Note the condition $i \not \in \{w,w+1\}$ is vacuous for $w \leq 0$.)
We shall call these factors \idx{generic}. Here and below we use the
notation $E = d\de$ and $F = \de d$, as in the previous Section. Next
if not as just described the remaining possible factors are
\begin{align*}
&\widetilde{S} := \bigl( \frac{n}{2}-k-\frac12 \bigr) E
+ \bigl( \frac{n}{2}-k+\frac12 \bigr) F, \\
&\widetilde{S}' := E-F +\frac4n \bigl( \frac{n}{2} -k \bigr) J
\qquad \text {and} \\
&\widetilde{S}'' = E-F.
\end{align*}
Factors $\widetilde{S}$ and $\widetilde{S}'$ come from the polynomial 
$\widetilde{P}_k$ in Theorem \ref{Lfactor}, i.e.\ from \nn{SqYam}, 
the factor $\widetilde{S}''$ appears in Theorem \ref{Lfactor} for 
$k=\frac{n}{2}$. Summarising, every factor of $S_t$ and $S_u$ from the 
Proposition \ref{identity} is either $\widetilde{S}_i$, $1 \leq i \leq \ell$ (generic
factors) or $\widetilde{S}$ or $\widetilde{S}'$ or $\widetilde{S}''$.

First consider a generic pair of factors $S_t$, $S_u$ i.e.\ 
$S_t =\widetilde{S}_i$, $S_u =\widetilde{S}_{i+p}$. That is,
\begin{align*}
S_t =& B_-(B_+-1)E + (B_-+1)B_+F +  \frac2n B_-(B_+-1)(B_-+1)B_+ J, \\
S_u =& (B_-+p)(B_+-p-1)E + (B_-+p+1)(B_+-p)F \\ 
&+ \frac2n (B_-+p)(B_+-p-1)(B_-+p+1)(B_+-p)J. 
\end{align*}
Here $B_+ = \frac{n}{2}-k+\ell-r$ and $B_- = \frac{n}{2}-k-\ell+r$ where
$r=i-1$ i.e.\ $0 \leq r \leq \ell-1$ and $0<p$ such that $r+p \leq \ell-1$. 
The conditions $i \not\in \{w,w+1\}$ and $i+p \not\in \{w,w+1\}$ then mean, 
respectively, $r \not\in \{w-1,w\}$ and $r+p \not\in \{w-1,w\}$.
Now a short computation reveals that 
$$
\overline{S}_u := \frac{1}{p(B_+ - B_- -(p+1))}  \bigl( S_u-S_t \bigr)
= E+F + \frac{2}{n} \bigl[ 2B_+B_- + (p+1)(B_+-B_--p) \bigr]J.
$$
where the denominator is nonzero since  
$B_+ - B_- -(p+1) = 2(\ell-r) -(p+1) = (\ell-r-p-1) + (\ell-r) >0$. 
It is sufficient to replace the pair of operators $S_t$, $S_u$ by the pair
$S_t$, $\overline{S}_u$ i.e.\ to find operators $\ph_t$ and $\ph_u$ such that
$\ph_t \circ S_t + \ph_u \circ \overline{S}_u = \id$. We shall obtain these 
operators in the form
\begin{equation} \label{scalars}
\ph_t = x_1 E + y_1 F + z_1 \quad \text{and} \quad
\ph_u = x_2 E + y_2 F + z_2
\end{equation}
where $x_1, y_1, z_1, x_2, y_2, z_2 \in \R$. We will use the notation
\begin{align*}
&\al_1 = B_-(B_+-1) \not=0, \quad \al_2 = B_+(B_-+1) \not=0 \quad\
\text{and}  \\
&\be = \frac{2}{n} \bigl[ 2B_+B_- + (p+1)(B_+-B_--p) \bigr].
\end{align*}
The inequalities follow from the following: $B_+>0$ always and 
$B_+= (\frac{n}{2}-k) + (\ell-r)=1$ only for $k = \frac{n}{2}$ and 
$\ell = r+1$ and hence  $w = k+\ell-\frac{n}{2}=r+1$. Next $B_-=0$ is equivalent 
to $r=k+\ell-\frac{n}{2}=w$ and
$B_-=-1$ is equivalent to $r=k+\ell-\frac{n}{2}-1=w-1$. (Recall
we assume $r \not\in \{w-1,w\}$.)

It remains to determine the scalars $x_1, y_1, z_1, x_2, y_2, z_2$. Putting
$x_2 = -\al_1 x_1$, $y_2 = -\al_2 y_1$ and $z_2=0$ we obtain
\begin{align*}
&\ph_t \circ S_t + \ph_u \circ \overline{S}_u = \\
& \quad 
= (x_1 E + y_1 F + z_1) \circ (\al_1 E + \al_2 F + \frac2n \al_1 \al_2 J) + 
(-\al_1 x_1 E - \al_2 y_1 F) \circ (E + F + \be J) = \\
&\quad = 
\bigl[ \bigl( \frac2n \al_1\al_2 - \be\al_1 \bigr) J x_1 +\al_1z_1\bigr]E+
\bigl[ \bigl( \frac2n \al_1\al_2 - \be\al_2 \bigr) J y_1 +\al_2z_1\bigr]F
+ \frac2n \al_1\al_2z_1J \id.
\end{align*}
We are looking for $x_1$, $y_1$, $z_1$ and $z_2$ such that only the
last term of the previous display is nonzero (and equal to the
identity).  Thus $z_1 := \frac{n}{2\al_1\al_2 J}$ and this determines
$x_1$ and $y_1$ provided
$$
\al_1 \bigl[ \frac2n \al_2 - \be \bigr] \not=0 \quad \text{and} \quad
\al_2 \bigl[ \frac2n \al_1 - \be \bigr] \not=0 .
$$ 
 We shall now verify the previous two inequalities. First we consider
$$
\al_1 \bigl[ \frac2n \al_2 - \be \bigr] = 
- \frac2n \al_1 \bigl[  B_+B_- + (p+1)(B_+-B_--p) - B_+ \bigr]
$$ 
where $\al_1 \not=0$. Observe that the second factor on the right hand
side is equal to zero only for $B_-=-p$ or $B_+ = p+1$. (Considering
the second factor as the quadratic expression in $p$, these are the two
roots.) The case $B_+ -p-1 = (\frac{n}{2}-k) + (\ell-r-p-1) =0$ is
equivalent to $\frac{n}{2}=k$ and $\ell = r+p+1$ i.e.\ $w =
k+\ell-\frac{n}{2} = r+p+1$ and the case $B_-=-p$ is equivalent to $-w
= \frac{n}{2}-k-\ell = -r-p$.  Since we assume $r+p \not\in
\{w-1,w\}$, the previous display is indeed nonzero. Second we consider
$$
\al_2 \bigl[ \frac2n \al_1 - \be \bigr] = 
- \frac2n \al_2 \bigl[  B_+B_- + (p+1)(B_+-B_--p) + B_- \bigr]
$$ 
where $\al_2 \not=0$. Observe now that the second factor on the
right hand side is equal to zero only for $B_-=-p-1$ or $B_+ = p$. 
Since $B_+ -p = (\frac{n}{2}-k) + (\ell-r-p) >0$
and $B_-=-p$ is equivalent to $-w+1 = \frac{n}{2}-k-\ell+1 = -r-p$
(and we assume $r+p \not\in \{w-1,w\}$), the previous display is also
nonzero.  Summarising, we have proved the proposition for generic
factors $S_t$, $S_u$.  In particular, we have proved the proposition
for $n$ odd.

Henceforth we assume $n$ even.
The remaining possibilities for how to chose the pair of operators $S_t$, $S_u$ are:
\begin{align*}
&(a)\ S_t \ \text{generic and\ } S_u = \widetilde{S} \ 
\text{for} \ k<\frac{n}{2}, \notag \\
&(b)\ S_t \ \text{generic and\ } S_u = \widetilde{S}' \ 
\text{for} \ k<\frac{n}{2}, \notag \\
&(c)\ S_t = \widetilde{S} \ \text{and\ } S_u = \widetilde{S}' \
\text{for} \ k<\frac{n}{2}, \notag \\
&(d)\ S_t \ \text{generic and\ } S_u = \widetilde{S}'' \
\text{for} \ k=\frac{n}{2}. \notag
\end{align*}
We shall discuss cases (a), (b), (c) first and 
assume $k<\frac{n}{2}$. Then we will treat (d) separately.
In the case (a) we deal with $S_t = \al_1 E + \al_2 F + \frac2n \al_1\al_2J$
and $S_u = aE + bF$ where $a = \frac{n}{2}-k-\frac12 \not=0$ and 
$b = \frac{n}{2}-k+\frac12 \not=0$ (since $n$ is even). We  find 
operators $\ph_u$ and $\ph_t$ in the form \nn{scalars} where 
$x_2 = - \al_1 \frac{x_1}{a}$, $y_2 = - \al_1 \frac{y_1}{b}$ and $z_2=0$. Then
\begin{align*}
&\ph_t \circ S_t + \ph_u \circ S_u = \\
& \quad 
= (x_1 E + y_1 F + z_1) \circ (\al_1 E + \al_2 F + \frac2n \al_1 \al_2 J) + 
(- \al_1 \frac{x_1}{a} E - \al_1 \frac{y_1}{b} F) \circ (aE + bF) = \\
&\quad = 
\bigl[ \frac2n \al_1\al_2J x_1 + \al_1z_1 \bigr] E+
\bigl[ \frac2n \al_1\al_2J y_1 + \al_2z_1 \bigr] F
+ \frac2n \al_1\al_2z_1J \id.
\end{align*}
We need the previous display to be equal to identity and since $\al_1 \not=0$
and $\al_2 \not=0$, this can be obviously achieved for unique $z_1$, $x_1$
and $y_1$.

In the case (b) we have $S_t = \al_1 E + \al_2 F + \frac2n \al_1\al_2J$
and $S_u = E-F + \frac4n (\frac{n}{2}-k)J$. In this case we find 
operators $\ph_u$ and $\ph_t$ in the form \nn{scalars} where 
$x_2 = - \al_1 x_1$, $y_2 = \al_2 y_1$ and $z_2=0$. Then putting
$\ga = 2 (\frac{n}{2}-k) \not=0$ we obtain
\begin{align*}
&\ph_t \circ S_t + \ph_u \circ S_u = \\
&\quad 
= (x_1 E + y_1 F + z_1) \circ (\al_1 E + \al_2 F + \frac2n \al_1 \al_2 J) 
+ (- \al_1 x_1 E + \al_2 y_1 F) \circ ( E-F + \frac2n \ga J) = \\
&\quad = 
\bigl[ \frac2n \al_1 \bigl( \al_2 - \ga \bigr) Jx_1 + \al_1z_1 \bigr] E 
+\bigl[ \frac2n \al_2 \bigl( \al_1 + \ga \bigr) Jy_1 + \al_1z_1 \bigr] F
+ \frac2n \al_1\al_2z_1J \id.
\end{align*}
We need the previous display to be equal to identity and this (uniquely) 
determines $z_1$, $x_1$ and $y_1$ provided $\al_2 - \ga \not=0$ and
$\al_1 + \ga \not=0$ where, observe, $\ga = B_+ + B_-$. Thus
$\al_2 - \ga = B_+(B_- +1) -(B_+ + B_-) = B_-(B_+ -1)$
and $\al_1 + \ga = B_-(B_+ -1) + (B_+ + B_-) = B_+(B_- +1)$. Since
$B_-\not=0$, $B_+ \not=0$ and we observed above that both $B_+=1$
$B_-=-1$ imply $r=w-1$ (recall we assume $r \not\in \{w-1,w\}$), we have indeed 
verified that $\al_2 - \ga \not=0$ and $\al_1 + \ga \not=0$.

In the case (c) we have $S_t = a E + b F$
and $S_u = E-F + \frac4n (\frac{n}{2}-k)J$ where 
$a = \frac{n}{2}-k-\frac12 \not=0$ and $b = \frac{n}{2}-k+\frac12 \not=0$. 
Here we find operators $\ph_u$ and $\ph_t$ in the form \nn{scalars} where 
$x_2 = - a x_1$, $y_2 = b y_1$ and $z_1=0$. Then putting
$\ga = 2 (\frac{n}{2}-k) \not=0$ we obtain
\begin{align*}
&\ph_t \circ S_t + \ph_u \circ S_u = \\
&\quad 
= (x_1 E + y_1 F) \circ (a E + b F) 
+ (- a x_1 E + b y_1 F + z_2) \circ ( E-F + \frac2n \ga J) = \\
&\quad = 
\bigl[ -\frac2n a\ga J x_1 + z_2 \bigr] E 
+\bigl[ \frac2n b\ga J y_1 - z_2 \bigr] F + \frac2n \ga z_2 J \id.
\end{align*}
One can obviously choose (uniquely) $x_1$, $y_1$ and $z_2$ such that
the previous display is equal to the identity.

Finally we shall discuss the case (d). We thus assume $k = \frac{n}{2}$.
Then $B_+ = \ell-r$ and $B_- = -(\ell-r)$ hence
$\al_1 = \al_2 = -(\ell-r)(\ell-r-1) =: \bar{\al} \not=0$.
Here the inequality follows since $\ell-r>0$ and $\ell-r-1=0$ would imply
$w = k+\ell-\frac{n}{2} = r+1$ (recall we assume $r \not\in \{w-1,w\}$).
Thus we can replace $S_t = \bar{\al}^2 ( E + F + \frac2n J)$
by $\overline{S}_t := E + F + \frac2n J$. The other factor is $S_u = E-F$.
We find operators $\ph_u$ and $\ph_t$ in the form \nn{scalars} where 
$x_2 = x_1$, $y_2 = y_1$ and $z_2=0$. Then
\begin{align*}
&\ph_t \circ \overline{S}_t + \ph_u \circ S_u = \\
&\quad 
= (x_1 E + y_1 F + z_1) \circ (E + F + \frac2n J) 
+ (- x_1 E + y_1 F) \circ ( E-F) = \\
&\quad = 
\bigl[ \frac2n J x_1 + z_1 \bigr] E 
+\bigl[ \frac2n J y_1 + z_1 \bigr] F + \frac2n J z_1 \id.
\end{align*}
It is obvious that there is a unique choice for $x_1$, $y_1$ and $z_1$
such that the previous display is equal to the identity. 
\end{proof}

Combining the previous proposition with \cite[Corollary 2.3]{GoSiDec}, we 
obtain the final result.

\begin{theorem} \label{LdecGen}
Let $L_k^\ell = S_1 \ldots S_\ell: \cE^k \to \cE^k$ be the decomposition of 
$L_k^\ell$ to second 
order factors from Theorem \ref{Lfactor} in the Einstein scale $\si$ and 
assume $J \not=0$ in this scale. Then
$$
\cN(L_k^\ell) = \cN(S_1) \oplus \ldots \oplus \cN(S_\ell).
$$
\end{theorem}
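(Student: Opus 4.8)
The plan is to obtain Theorem \ref{LdecGen} as a direct application of the abstract ``decomposition into commuting factors'' result \cite[Corollary 2.3]{GoSiDec}; essentially all of the substantive work has already been carried out in Proposition \ref{identity}. Recall that \cite[Corollary 2.3]{GoSiDec} asserts that if $P = S_1 \cdots S_\ell$ is a composition of mutually commuting linear operators on a vector space which are pairwise \emph{relatively invertible} --- meaning that for each pair $S_t, S_u$ there are operators $\ph_t, \ph_u$, commuting with each other and with $S_t$ and $S_u$, such that $\ph_t \circ S_t + \ph_u \circ S_u = \id$ --- then $\cN(P) = \cN(S_1) \oplus \cdots \oplus \cN(S_\ell)$.

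First I would check the commutativity hypothesis. By Theorem \ref{Lfactor} each factor $S_i$ has the form $a_i d\de + b_i \de d + c_i$ with $a_i, b_i, c_i \in \R$ ($J$ being constant in the Einstein scale), and on $\cE^k$ one has $d^2 = 0 = \de^2$, so $d\de \circ \de d = 0 = \de d \circ d\de$; hence $d\de$ and $\de d$ commute and so do all $\R$-polynomials in them, which gives mutual commutativity of $S_1, \ldots, S_\ell$. Next I would supply the relative invertibility: for every pair $1 \leq t < u \leq \ell$, Proposition \ref{identity} produces operators $\ph_t, \ph_u$ --- themselves degree one polynomials in $d\de$ and $\de d$, hence in the same commutative algebra --- with $\ph_t \circ S_t + \ph_u \circ S_u = \id$ and all four operators mutually commuting. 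This is exactly where the standing hypothesis $J \not= 0$ enters. With both hypotheses verified, \cite[Corollary 2.3]{GoSiDec} applies verbatim and yields the asserted direct sum decomposition.

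Once Proposition \ref{identity} is granted there is essentially nothing left to prove here; the only point requiring a little care is to confirm that the form of relative invertibility delivered by Proposition \ref{identity} --- pairwise, with multipliers drawn from the commutative algebra generated by $d\de$ and $\de d$ --- is precisely the hypothesis demanded by \cite[Corollary 2.3]{GoSiDec}. The genuine obstacle lies upstream, in the case analysis inside Proposition \ref{identity} (the generic factors $\widetilde{S}_i$ versus the exceptional factors $\widetilde{S}, \widetilde{S}', \widetilde{S}''$, together with the verification that all denominators arising in the construction of $\ph_t, \ph_u$ are nonzero), which is already complete.
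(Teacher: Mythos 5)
Your proof is correct and follows exactly the paper's route: the published argument is the single line ``combining Proposition \ref{identity} with \cite[Corollary 2.3]{GoSiDec}, we obtain the final result,'' and your verification of commutativity (via $d^2=\de^2=0$) and of the relative-invertibility hypothesis is just a more explicit spelling out of the same application.
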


\end{document}